\pgfplotsset{compat=1.13}
\newcommand{\norm}[1]{\left\lVert#1\right\rVert}
\DeclareMathOperator*{\argmin}{arg\,min}
\DeclareMathOperator*{\argmax}{arg\,max}
\newtheorem{remark}{Remark}
\newtheorem{proposition}{Proposition}
\newtheorem{theorem}{Theorem}
\algnewcommand\algorithmicinput{\textbf{Input:}}
\algnewcommand\Input{\item[\algorithmicinput]}
\algnewcommand\algorithmicoutput{\textbf{Output:}}
\algnewcommand\Output{\item[\algorithmicoutput]}
\setlist[itemize]{noitemsep} 
\titleformat{\section}[block]{\large\scshape\centering}{\thesection.}{1em}{} 
\titleformat{\subsection}[block]{\large}{\thesubsection.}{1em}{} 
\title{Estimation of Location and Orientation for \\Underwater Vehicles from Range Measurements} 
\author[$\dagger$]{Sai Krishna Kanth Hari}
\author[$\ddagger$]{Kaarthik Sundar}
\author[1]{Jose Braga}
\author[1]{\\Joao Teixeira}
\author[1]{Joao Sousa}
\author[$\dagger$]{Swaroop Darbha\thanks{Corresponding author: dswaroop@tamu.edu}}
\affil[$\dagger$]{\small Texas A\&M University, College Station, Texas, USA}
\affil[$\ddagger$]{\small Center for Nonlinear Studies, Los Alamos National Laboratory, USA}
\affil[1]{\small LSTS, Underwater Systems and Technology Laboratory, University of Porto, Portugal}
\date{} 
\begin{document}
\maketitle

\section{Introduction}\label{sec:intro}

Over the past decade, autonomous underwater vehicles (AUVs) have gathered tremendous significance due to their ability to operate in extreme environmental conditions and perform complex commercial, civilian, and military tasks that were previously considered unrealistic. A few such tasks include oil and gas exploration, performing mine countermeasures, finding wreckage of missing airplanes, studying the physical and biological properties of the oceans, etc. In all the aforementioned missions, the problem of localization of underwater vehicles, which deals with the estimation of location and orientation of vehicles, is important for autonomous navigation, guidance, and control \cite{Leonard1998}. Knowledge of location and orientation is necessary to ensure that a vehicle autonomously tracks its desired trajectory for vehicle control, safe operation, and recovery. Localization therefore requires sensing, and correspondingly, localization procedures are dependent on the available sensory measurements. Current underwater vehicle platforms are equipped with IMU/GPS, compass, Doppler Velocity Logs (DVL), sonar, acoustic transponders or beacons, video-based sensors, pressure sensors, communication devices or some subset of them to aid the localization \cite{Paull2014}. Since sensory measurements are usually contaminated with noise, the problem of localization also requires filtering the noise in order to determine an accurate estimate of location and orientation. Numerous algorithms have been developed for localization of aerial and ground vehicles \cite{Sundar2017,Sundar2018}. However, localization in underwater environments is quite challenging due to its harsh ambient conditions \cite{Zhang2015}.

The physical characteristics of an underwater environment make it practically impossible to use radio waves for long distance communication; this is due to the rapid attenuation of high frequency electromagnetic waves underwater. Hence, the traditional global positioning system (GPS) cannot be used for underwater localization. In the past, techniques that utilize external environmental features such as magnetic field maps \cite{Armstrong2010, Armstrong2009}, gravitational anomaly \cite{Tang2017, Bishop2002} have been employed. These methods fall under the category of simultaneous localization and mapping (SLAM), where geophysical parameters are compared to existing field maps to determine the location of vehicles. Optical imagery is another well-used SLAM method. However, such methods face a number of practical difficulties due to the unstructured nature of the underwater environment. Line-of-sight based communication using optical signals, also faces the problem of low transmission distances. In fact, acoustic signals are known to outperform their counterparts such as radio and optical signals \cite{Qureshi2016}. Hence, using acoustic sensor networks has become the most convenient mode of underwater communication.  To that end, Underwater Acoustic Sensor Networks (UASNs) are widely used for underwater communications. UASNs involve deployment of a variable number of sensors and vehicles to perform collaborative monitoring tasks over a given area. Localization schemes using UASNs can be broadly classified into two types: range-free schemes and range-based schemes. Range-free schemes, as the name suggests, do not use any range measurements for localization; examples of range-free schemes include single hop and multihop count-based algorithms \cite{Niculescu2003, Wong2005}, area and centroid based algorithms \cite{Bulusu2000}, to name a few. On the other hand, range-based methods utilize bearing or range information obtained from Time of Arrival (TOA) or Time Difference of Arrival (TDOA) of acoustic signals to estimate their distances from desired nodes in the network. Range-based schemes can in turn be classified into three main categories: infrastructure-based methods, distributed positioning schemes, and schemes that use mobile beacons. In general, the localization estimates provided by range-based methods are more precise compared to those provided by the range-free methods \cite{Chandrasekhar2006}.  However, acoustic communication faces a myriad of challenges underwater such as data losses, high latency, high volatility of sound speed (due to continuous changes in water temperature and salinity) etc., failing to provide accurate range measurements. 

In this paper, we address the problem of infrastructure-based localization in which a set of acoustic sensors equipped with transmitters are deployed at predetermined locations. Common acoustic sensors include short baseline (SBL), ultrashort baseline (USBL) and long baseline (LBL) beacons. As the name suggests, this classification is made on the basis of the area of coverage of these sensors. The range of USBL and SBL beacons is short, and they usually require a surface ship from which they can be suspended. While the USBL beacons are limited by their range, the accuracy of SBL beacons depends on the size of the baseline. In the current work, we use LBL beacons due to their vast area of coverage. These beacons can be deployed at the desired locations by mooring them to sea floor. Additionally, surface beacons such as GPS Intelligent Buoys (GIBs) can also be used \cite{Paull2014}.  Multiple receivers on-board the AUV, whose position and orientation is to be estimated, receive the transmitted signal from the beacons. Either the two-way travel time (TWTT) or the one-way travel time (OWTT) with time synchronization \cite{Eustice2011} can be used to obtain the distance between the beacons and the receivers on-board the vehicle. In the ideal scenario of noise-free sensory measurements, one can use triangulation or multilateration to determine the location and orientation of the AUV. Typically, a significant error in the range measurements is caused by an error in the sound speed profile. Even if the sound speed profile is known at the start of the mission, the acoustic propagation environment can change during the mission. This issue has been dealt with by accounting for the sound speed variations in space and time, and in the process provide more accurate range measurements \cite{Deffenbaugh1994, Deffenbaugh1996}. Techniques such as signal processing and statistical filtering have been used to improve the accuracy of range measurements \cite{Ash2005}. 

The noise in the range measurements causes uncertainty in the on-board receivers' locations which in turn results in an uncertainty in the AUV's location. Existing technologies such as Seaweb technology \cite{Hahn2005} (one of the schemes implemented by the U.S. Navy), use naive least-squares approaches to estimate the location of the receivers and hence, the vehicle. Nevertheless, the solutions obtained by these methods fail to take advantage of well established error patterns. For instance, the accuracy of the range measurements is known to reduce with increase in distance between sensor-receiver pairs. Moreover, in \cite{Garcia2005}, the author highlights the importance of embedding the variations in physical properties of water such as temperature, salinity etc., into localization schemes. Hence, there is a need for an appropriate position estimation algorithm that heeds such features. With regards to the orientation estimation, existing technologies include magnetic sensors, roll and pitch estimation sensors such as pendulum-tilt, fluid-level sensors, and gyro-stabilized versions of these sensors. One of the major drawbacks of magnetic sensors include the magnetic disturbance of the vehicle itself. Gravity-based roll-pitch compensation techniques employed to address this issue face the problem of degraded accuracy in the presence of heave, surge and sway (induced acceleration) of the vehicle \cite{Whitcomb1999,Kinsey2004}. These sensors also face the problem magnetic anomalies near hydro-thermal vents on mid-ocean ridges. The accuracy of pendulum and fluid level sensors also degrade in the presence of induced acceleration. High accurate navigation systems employ true north seeking gyro-compasses to estimate the orientation of the vehicles \cite{Kinsey2006}. This article proposes a two-step, inexpensive technique for localization that takes into account all the issues presented in this paragraph. 

In this work, we primarily develop an optimization-based localization scheme, with a robust error model equipped to account for highly variable physical properties in an underwater setting\footnote{ Preliminary versions of this article without detailed theoretical and experimental results have been published as conference papers \cite{Hari2015, Hari2017}.}. An important assumption we make is that the AUV always travels in a region defined by the convex hull of the beacon locations. The model proposed for measurement assumes that the true distance between a receiver and a beacon is at most equal to a predetermined calibration/sensing function of the range measurement. Determination of this function is formulated as polynomial optimization problem and is a crucial step for localization. The proposed two-step procedure for localization is as follows: based on the range measurements specific to a receiver from the beacons, a convex optimization problem is proposed to estimate the location of the receiver. The estimate is essentially a center of the set of possible locations of the receiver. In the second step, the location estimates of the vehicle are corrected using rigid body motion constraints and the orientation of the rigid body is thus determined. Numerical examples and experimental results provided at the end corroborate the procedures developed in this paper. The proposed technique also provides a scope for online re-calibration, when the vehicle maneuvers outside the range where the calibration/sensing function is not computed a priori. Furthermore, unlike well-known techniques like dead reckoning, our approach provides bounded error estimates i.e., errors do not propagate with time. Our method also provides a measure of uncertainty of the location estimates. Another important problem in infrastructure-based localization is the continuous movement of beacons due to water currents. The problem setup and formulation, as presented in the forthcoming sections, generalizes to handle uncertainty in the beacon locations. 

None of the existing localization methods provide a perfect solution to all the challenges faced in an underwater environment \cite{Kinsey2006}. Furthermore, the applicability of most of the algorithms is restricted to a single ocean depth zone (surface, mid-zone or near bottom zone). The proposed algorithm, even though currently restricted to the convex hull of the beacons, is applicable to all depth zones. Hence, based on the application, the current algorithm can either be operated independently as an inexpensive primary scheme, or can be used as a secondary scheme to aid the existing methods. The rest of the paper is organized as follows: In \ref{sec:setup} and \ref{sec:formulation}, we describe the setup of physical problem and its mathematical formulation, respectively. In \ref{sec:algorithms}, we present formulations and algorithms to solve the associated optimization problems for the location and orientation estimation and in \ref{sec:results}, we corroborate the effectiveness of our proposed approach through extensive numerical and experimental results. Concluding remarks are provided in \ref{sec:conclusion}.

\section{Problem setup} \label{sec:setup}
Consider an underwater sensor network, consisting of $N$ acoustic LBL beacons positioned at previously determined locations, to aid an AUV in its localization. Let $I := \{1,2,\dots,N\}$ denote the set of $N$ beacons. Let the coordinates of these beacons in a global reference frame be $\bm{B}_i := (x^b_i,y^b_i,z^b_i)$ for every $i \in I$. In practice, there is uncertainty associated with these beacon locations and the algorithms that are presented in this article extend to that case as well. For ease of exposition, we have assumed that there is no uncertainty associated with these beacon locations and we do not present the formulations and associated algorithms with the uncertainty in beacon locations.  
The AUV, whose position and orientation are to be determined, is equipped with $L$ receivers on-board denoted by the set $J := \{1,2,\dots,L\}$. The AUV assumed to navigate within the convex hull of these beacons. Let the location estimates of these receivers in the global reference frame be $\bm{r}_j := (x_j,y_j,z_j)$, $j \in J$. Each beacon $i\in I$ is assumed to communicate with each receivers on-board, $j \in J$, to estimate the ranges between beacon $i$ and receiver $j$ utilizing the OWTT with synchronized clocks. A cut-off time is selected for the acoustic signals to reach from the beacons to the receivers, and all the range measurements obtained before the cut-off time are utilized for localization. Let $D_{ij}$ be the measured distance of the on-board receiver $j$ in the vehicle from the beacon $i$ while $d_{ij}$ is the true value. These measurements need not correspond to its true value; let $\phi(D_{ij})$ be an increasing function that provides an upper bound on the true distance, $d_{ij}$, \emph{i.e.}, $d_{ij} \leqslant \phi(D_{ij})$. Essentially, the sensing model, as defined by the calibration function $\phi(\cdot)$, indicates that the true position of the receiver, $j$, will always be contained in the sphere of radius $\phi(D_{ij})$ and centered at the beacon, $i$. Additionally, the calibration function, when modeled in the aforementioned manner, would account for various errors arising in an underwater setting due to variation of the speed of sound, small change in vehicle's location before range measurements from various beacons are obtained, changes in the physical properties of water etc. The determination of this function is obtained using a set of range measurements obtained from the on-board receivers. The problem of determining this calibration function $\phi(\cdot)$ is formulated as a semi-definite program in section \ref{subsec:calibration}. This sensing model has been chosen so that it is amenable to experimental corroboration and makes the subsequent formulation cleaner. Using the aforementioned notations, the problem of localization is stated as follows:\\

\noindent \emph{Determine the estimates $(x_j,y_j,z_j)$, $j\in J$ of the location of the $L$ on-board receivers as well as the orientation of the vehicle that is treated as a rigid body.\\}

We remark that even though the standard pressure sensors such as strain gauges and quartz crystals provide accurate measurements of water depth, they need not always correspond to the geodetic altitude of the vehicle. This is due to the variation of ocean's surface \cite{Kinsey2006}. Hence, the problem is set up in 3-D with the view that 3-D estimates provided by the proposed algorithm would be useful. However, in applications where the depth measurements from these standard sensors sufficiently accurate, they can be combined together with the estimates obtained by setting up a 2-D version of the problem to achieve vehicle localization.

\section{Mathematical formulation} \label{sec:formulation}
In this section, we present a mathematical formulation of the following three problems: (i) optimal estimation of the location of the on-board receiver $j \in J$, (ii) determination of the calibration function $\phi(\cdot)$ for the sensing model, and (iii) estimation of orientation of the vehicle from the location estimates of the on-board receivers. 

\subsection{Optimal position estimation of the $j$\textsuperscript{th} on-board receiver} \label{subsec:formulation-position} 
Let $j \in J$ denote any arbitrary on-board receiver attached to the AUV. Using the distance measurements from each beacon $i\in I$ to the receiver $j$, it is readily clear that 
\begin{flalign}
\norm{\bm r_j - \bm B_i} = d_{ij} \leqslant \phi(D_{ij}), ~ i\in I. \label{eq:sphere}
\end{flalign}
The above set of $N$ distance constraints are convex in $\bm r_j$; note that $\bm B_i$ are known a priori. Let $\mathcal F_j$ be the feasible values of $\bm r_j$ for the above set of constraints. Essentially, the feasible set is the set obtained by intersecting spheres centered at the beacons and of radii determined by the range measurements gathered by the receiver $j \in J$; hence, $\mathcal F_j$ is a convex set. The feasible set indicates the set of all possible locations of the receiver, $j$. The center of the set $\mathcal F_j$ can be considered as  the best estimate of $\bm r_j$. We consider two notions of ``center'' of the set $\mathcal F_j$: the center of the largest inscribed disk (referred to as Chebychev center) and the center of the maximum volume inscribed ellipsoid. The former can be computed via linear programming while the latter, via semi-definite programming. 
\subsubsection{Chebychev center of $\mathcal F_j$} \label{subsubsec:cc} 
If $\bm u$ is any unit vector and $\bm r_{c,j}$ is the center of the largest inscribed disk in $\mathcal F_j$, then the Chebychev center of $\mathcal F_j$ can be computed by the following optimization problem:
\begin{flalign}
 (\mathcal L_1) \quad &\max_{l, \bm r_{c,j}} ~ l, \text{~subject to:}  \label{eq:cc-obj} \\
 \norm{\bm r_{c,j} + l \bm u - \bm B_i} &\leqslant \phi(D_{ij}), ~ i\in I, ~ \forall \bm u.  \label{eq:cc-spheres}
\end{flalign}
The constraints \eqref{eq:cc-spheres} are convex conic constraints \cite{Boyd2004} and describe a disk. The radius of the disk $l$ is defined by the model for measurement and is characterized by the function $\phi(\cdot)$. These constraints can be relaxed to linear constraints in the same manner as approximating a disk by a regular polygon circumscribing the disk. Note that for any vector $\bm x$, its norm $\norm{\bm x} = \max_{\bm v:\norm{\bm v}=1}~\bm v \cdot \bm x$. Hence, the conic constraints in Eq. \eqref{eq:cc-spheres} can be recast as a semi-infinite set of linear inequalities \cite{Boyd2004} as:
\begin{flalign*}
 \max_{\bm v:\norm{\bm v}=1}\left(\bm v \cdot \bm r_{c,j} + l \bm v \cdot \bm u - \bm v \cdot \bm B_i\right) \leqslant \phi(D_{ij}), ~i\in I, \forall \bm u. 
\end{flalign*}
The tightest inequality in the above constraint corresponds to $\bm u = \bm v$ and hence, the semi-infinite linear program describing the location estimation problem is:
\begin{flalign*}
& (\mathcal L_2) \quad \max_{l, \bm r_{c,j}} ~ l, \text{~subject to:  } \\
& \bm v \cdot \bm r_{c,j} + l  \leqslant  \bm v \cdot \bm B_i + \phi(D_{ij}), ~i \in I, \forall \bm v: \norm{\bm v}=1.
\end{flalign*}
One known issue with the Chebychev center is that it can be non-unique depending on the feasible region $\mathcal F_j$ \cite{Boyd2004}. This is not the case with the maximum volume inscribed ellipsoid, presened in the next section.

\subsubsection{Maximum volume inscribed ellipsoid of $\mathcal F_j$} \label{subsubsec:mve} 
The volume of the feasible set $\mathcal F_j$ is a measure of the uncertainty/confidence associated with the position of the $j$\textsuperscript{th} on-board receiver. Here, the ``best'' estimate is given by the center of the maximum volume ellipsoid $\mathcal E_j$ contained in $\mathcal F_j$. The rationale behind this approach is that the volume $V_j$ of this ellipsoid $\mathcal E_j$ provides a lower bound for the volume of $\mathcal F_j$ while the minimum volume of an ellipsoid containing $\mathcal F_j$ is within $\sqrt 3V_j$ in a three dimensional space \cite{Boyd2004}. Hence the volume $V_j$ can be used as a proxy measure for the volume of $\mathcal F_j$. Moreover, the problem of determining the maximum volume ellipsoid contained in $\mathcal F_j$ is convex while the problem of determining the minimum volume ellipsoid containing $\mathcal F_j$ is intractable given the inequality constraints \cite{Todd2016}. Furthermore, the estimate of the center of this ellipsoid is invariant under affine transformations unlike the Chebychev center \cite{Boyd2004}.  

Any $\bm x \in \mathcal E_j$ can be written as $\bm x = P_j\bm u + \bm r_{c,j}$, where $P_j \succeq 0$ (symmetric and positive semi-definite matrix of appropriate dimension), $\bm r_{c,j}$ is the center of the $\mathcal E_j$ and $\norm{\bm u}^2 \leqslant 1$.  Using these notations, the problem of computing the maximum volume inscribed ellipsoid $\mathcal E_j $ of $\mathcal F_j$ can be formulated as a SDP:
\begin{flalign}
(\mathcal L_3) \quad V_j := &\max \log \operatorname{det}~P_j,\notag \\
\text{~subject to: } P_j &\succeq 0 \text{ and} \label{eq:mve-sdp} \\
\norm{P_j\bm u + \bm r_{c,j} - \bm B_i} &\leqslant \phi(D_{ij}), \notag \\
\qquad \qquad &\forall i \in I, \forall \{\bm u:\norm{\bm u}^2 \leqslant 1\}. \label{eq:mve-spheres} 
\end{flalign} 
The constraints \eqref{eq:mve-spheres} are convex and represent the ellipsoid $\mathcal E_j$ that is constrained to lie in the intersection of the spheres centered at the beacon locations of radii determined by the range measurements gathered by the $j$\textsuperscript{th} receiver.
\begin{proposition} \label{prop:mve}
For a fixed value of $i \in I$, the set of infinite constraints \eqref{eq:mve-spheres} in formulation $\mathcal L_3$ is equivalent to the following set of constraints:
\begin{flalign} \lambda \geqslant 0 \text{ and }
\begin{bmatrix}
\phi(D_{ij})-\lambda & (\bm r_{c,j}-\bm B_i)^T & 0 \\ 
(\bm r_{c,j} - \bm B_i) & \phi(D_{ij}) I_3 & P_j \\ 
0 & P_j & \lambda I_3
\end{bmatrix} \succeq 0.
\end{flalign}
\end{proposition}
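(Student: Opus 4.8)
The plan is to read constraint \eqref{eq:mve-spheres} as an ``ellipsoid-inside-a-ball'' containment and to collapse the semi-infinite family into a single LMI by the (lossless) S-procedure for a norm-bounded vector uncertainty, i.e.\ Petersen's lemma. Throughout I fix $i$ and abbreviate $\rho := \phi(D_{ij}) > 0$ and $\bm{c} := \bm{r}_{c,j} - \bm{B}_i$, so that \eqref{eq:mve-spheres} reads $\norm{P_j\bm{u} + \bm{c}} \leqslant \rho$ for every $\bm{u}$ with $\norm{\bm{u}}^2 \leqslant 1$.

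First I would rewrite each individual norm inequality as a $4\times 4$ LMI. Since $\rho > 0$, a Schur-complement argument gives, for each fixed $\bm{u}$,
\[
\norm{P_j\bm{u} + \bm{c}}\leqslant \rho \iff
\begin{bmatrix} \rho & (P_j\bm{u} + \bm{c})^T \\ P_j\bm{u} + \bm{c} & \rho I_3\end{bmatrix} \succeq 0.
\]
The left-hand matrix is affine in $\bm{u}$; writing $\bm{e}_1 = (1,0,0,0)^T$ and $Q := \begin{bmatrix}\bm{0}^T \\ P_j\end{bmatrix}$ (using $P_j = P_j^T$), it splits as
\[
F_0 + Q\bm{u}\bm{e}_1^T + \bm{e}_1\bm{u}^T Q^T \succeq 0, \qquad F_0 := \begin{bmatrix}\rho & \bm{c}^T \\ \bm{c} & \rho I_3\end{bmatrix}.
\]
Thus the whole family \eqref{eq:mve-spheres} is exactly the requirement that this symmetric pencil be positive semidefinite for all $\norm{\bm{u}}\leqslant 1$.

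Next I would apply Petersen's lemma (the S-procedure for a single full uncertainty block) to $\bm{u}$. Testing positive semidefiniteness against an arbitrary $\bm{z}$ and minimizing the cross term over $\norm{\bm{u}}\leqslant 1$ by Cauchy--Schwarz shows the robust condition is equivalent to $\bm{z}^T F_0 \bm{z} \geqslant 2\,\lvert\bm{e}_1^T\bm{z}\rvert\,\norm{Q^T\bm{z}}$ for all $\bm{z}$; the lemma then certifies that this holds iff there exists a scalar $\lambda \geqslant 0$ with
\[
\begin{bmatrix} F_0 - \lambda\,\bm{e}_1\bm{e}_1^T & Q \\ Q^T & \lambda I_3\end{bmatrix}\succeq 0.
\]
Substituting $\bm{e}_1\bm{e}_1^T = \operatorname{diag}(1,0,0,0)$, so that $F_0 - \lambda\bm{e}_1\bm{e}_1^T = \begin{bmatrix}\rho-\lambda & \bm{c}^T \\ \bm{c} & \rho I_3\end{bmatrix}$, together with $Q = \begin{bmatrix}\bm{0}^T \\ P_j\end{bmatrix}$, reproduces verbatim the claimed $7\times 7$ matrix with $\rho = \phi(D_{ij})$ and $\bm{c} = \bm{r}_{c,j}-\bm{B}_i$, which establishes the equivalence.

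The main obstacle is \emph{losslessness}: I must argue the finite LMI is equivalent to, not merely sufficient for, the semi-infinite family. The sufficiency direction I would verify directly by the congruence $T = \begin{bmatrix} I_4 \\ \bm{u}\bm{e}_1^T\end{bmatrix}$, which sends the block LMI to $F_0 + Q\bm{u}\bm{e}_1^T + \bm{e}_1\bm{u}^T Q^T \succeq \lambda\,(1-\norm{\bm{u}}^2)\,\bm{e}_1\bm{e}_1^T \succeq 0$ for $\norm{\bm{u}}\leqslant 1$ and $\lambda\geqslant 0$, recovering \eqref{eq:mve-spheres}; the converse is the substantive content of Petersen's lemma and is exact precisely because there is a single norm-bounded vector block. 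Two technical points still need care: the Schur step requires $\rho = \phi(D_{ij}) > 0$, which holds as $\rho$ is a radius, and the boundary case $\lambda = 0$, where the off-diagonal block forces $P_j = 0$ and must be checked to remain consistent with the degenerate (point) ellipsoid.
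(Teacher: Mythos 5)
Your proof is correct and follows essentially the same route as the paper's: a Schur-complement reformulation of each norm constraint, followed by minimization of the bilinear cross term over the unit ball via Cauchy--Schwarz, and then the lossless S-procedure to collapse the semi-infinite family into the single $7\times 7$ LMI. The only difference is packaging: where you cite Petersen's lemma, the paper carries out the identical step explicitly by introducing an auxiliary vector $\bm{\phi}$ constrained by $\bm{\phi}^T\bm{\phi}\leqslant x^2$ and invoking the $\mathcal S$-lemma.
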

\begin{proof}
See Appendix A.
\end{proof}
\noindent Hence, an equivalent formulation for computing $\mathcal E_j$ is given by:
\begin{flalign*}
& (\mathcal L_4) \quad V_j = \max \log \operatorname{det}~P_j,\text{~subject to: } & \\ 
&\begin{bmatrix}
\phi(D_{ij})-\lambda & (\bm r_{c,j}-\bm B_i)^T & 0 \\ 
(\bm r_{c,j} - \bm B_i) & \phi(D_{ij}) I_3 & P_j \\ 
0 & P_j & \lambda I_3
\end{bmatrix} \succeq 0, P_j \succeq 0, \text{ and } \lambda \geqslant 0. 
\end{flalign*}

\subsection{Determination of the calibration function $\phi(\cdot)$ for the sensing model \label{subsec:calibration}}
As stated in the aforementioned section, given that the measured distance from a beacon $i$ to an the on-board receiver $j$, $D_{ij}$, the sensing model assumes that $\phi(D_{ij})$ is an increasing function that provides a bound on the true distance $d_{ij}$. One important ramification of choosing an increasing function $\phi(\cdot)$ is that the range measurements that originate from the beacons sufficiently far away from the vehicle would implicitly be ignored. The model also provides a bound on the true distance $d_{ij} \leqslant \phi(D_{ij})$ \emph{i.e.}, the model ensures that the true location of on-board receiver always lies within the disc centered at the beacon with radius $\phi(D_{ij})$. In this section, we formulate the problem of finding an increasing function $\phi(\cdot)$ using the range measurements and true distance data as a SDP. This problem can either be solved off-line before the beacons and the on-board receivers are deployed (receiver calibration process before deployment) or can be adaptively estimated and updated if there are other sensors in place to estimate the position of the vehicle via the technique of sensor fusion. For ease of exposition, we will first develop the mathematical formulation in an off-line setting where prior knowledge of the environment in which the AUV is to be operated is assumed. Adaptive, online estimation of the calibration function is presented later in section \ref{subsec:phi}.

We let $K$ denote a set of distances between a particular beacon and a receiver; this set is constructed with a prior knowledge of the environment in which the AUV and the beacons are deployed. For each data point $d_k$, $k\in K$, multiple range measurements are obtained while placing an arbitrary beacon and an on-board receiver $d_k$ units apart; we note that each of these range measurements corresponding to the true distance $d_k$ is noisy. Let $[D_k^l, D_k^u]$ denote the lower and upper bounds of the measurements corresponding to the true distance $d_k$, $k \in K$. Define $D^l := \min_{k\in K} D_k^l$ and $D^u := \max_{k\in K} D_k^u$. The problem now is to determine a univariate increasing function $\phi: [D^l, D^u] \rightarrow \mathbb{R}^+$ that solves the following optimization problem: 
\begin{flalign}
\min \sum_{k \in K} &(\phi(D_k^u) - d_k), \text{ subject to:} \label{eq:phi-obj}\\ 
\phi'(d) &\geqslant 0, ~\forall d\in [D^l, D^u], \text{ and} \label{eq:phi1} \\
\phi(D_k^l) &\geqslant d_k, ~\forall k\in K.\label{eq:phi2}
\end{flalign}  
The constraints \eqref{eq:phi1} enforce the function $\phi(\cdot)$ to be an increasing function in its domain and the constraints \eqref{eq:phi2} ensure for any range measurement $D_{ij}$ between a beacon $i$ and an on-board receiver $j$, $\phi(D_{ij})$ provides an upper bound on the true distance $d_{ij}$. The objective ensures that the function $\phi(\cdot)$ provides the tightest bound in the the sense of constraint \eqref{eq:phi2}. To solve the above problem, we approximate $\phi(x)$ using a univariate polynomial. The condition \eqref{eq:phi1} is equivalent to the polynomial $\phi'(\cdot)$ being non-negative in the interval $[D^l, D^u]$. We now state two known results using which we recast the non-negativity restrictions on the univariate polynomial to a SDP.  
\begin{theorem} \label{thm:ML}
(Markov-Luk\`acs theorem) Let $a < b$. Then a univariate polynomial $p(x)$ is non-negative on $[a,b]$, if and only if it can be written as 
\begin{flalign*}
& p(x) = 
\begin{cases}
s(x) + (x-a)(b-x)t(x), &\text{if } \operatorname{degree}(p) \text{ is even} \\
(x-a)s(x) + (b-x)t(x), &\text{if } \operatorname{degree}(p) \text{ is odd}
\end{cases}
\end{flalign*}
where $s(x)$ and $t(x)$ are `Sum of Squares' (SOS). In the first case, we have $\operatorname{degree}(p) = 2k$, and $\operatorname{degree}(s) \leq 2k$, $\operatorname{degree}(t) \leq 2k-2$. In the second, $\operatorname{degree}(p) = 2k+1$, and $\operatorname{degree}(s) \leq 2k$, $\operatorname{degree}(t) \leq 2k$.
\end{theorem}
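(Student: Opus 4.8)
The plan is to prove the two implications separately, with essentially all the work in the ``only if'' direction. The ``if'' direction is immediate: a sum of squares is non-negative everywhere, and on $[a,b]$ each of the weights $x-a$, $b-x$, and $(x-a)(b-x)$ is non-negative, so any polynomial in either of the two displayed forms is non-negative on $[a,b]$. The degree bookkeeping in this direction is trivial since $\deg[(x-a)(b-x)t]=\deg t+2$ and $\deg[(x-a)s]=\deg s+1$.

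For the converse, I would factor $p$ over $\mathbb{R}$ using the fundamental theorem of algebra: $p(x)=c\prod_i (x-r_i)^{m_i}\prod_j q_j(x)$, with $c$ the leading coefficient, $r_i$ the real roots, and $q_j$ monic irreducible quadratics. Each $q_j$ is positive definite, hence a sum of two squares. Because $p\geqslant 0$ on $[a,b]$, any real root $r_i\in(a,b)$ must have even multiplicity, since odd multiplicity would force a sign change inside the interval; these factors are therefore perfect squares. The remaining linear factors come from roots with $r_i\leqslant a$ or $r_i\geqslant b$, and up to an overall sign these are linear functions non-negative on $[a,b]$, each expressible as a non-negative combination of $1$ and $x-a$ (when $r_i\leqslant a$) or of $1$ and $b-x$ (when $r_i\geqslant b$). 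All the accumulated sign flips, together with $c$, collapse into a single constant that must be non-negative, because the product of the remaining non-negative atoms is non-negative on $[a,b]$ while $p\geqslant 0$ there.

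Define the two cones $E:=\{s+(x-a)(b-x)t: s,t \text{ SOS}\}$ and $O:=\{(x-a)s+(b-x)t: s,t \text{ SOS}\}$, both closed under addition. I would then verify atom membership and the multiplicative closure relations. Every SOS and every positive-definite quadratic lies in $E$, while every positive constant $\gamma$ lies in both cones (it is SOS, and also $\gamma=\frac{\gamma}{b-a}[(x-a)+(b-x)]$); consequently each non-negative-on-$[a,b]$ linear factor, being $x-a$ plus a non-negative constant or $b-x$ plus one, lies in $O$. The closure relations $E\cdot E\subseteq E$, $E\cdot O\subseteq O$, and $O\cdot O\subseteq E$ follow from explicit SOS identities, the representative one being
\[
\bigl((x-a)\sigma_1+(b-x)\tau_1\bigr)\bigl((x-a)\sigma_2+(b-x)\tau_2\bigr)=\bigl((x-a)^2\sigma_1\sigma_2+(b-x)^2\tau_1\tau_2\bigr)+(x-a)(b-x)\bigl(\sigma_1\tau_2+\sigma_2\tau_1\bigr),
\]
whose right-hand side is manifestly of the form $s+(x-a)(b-x)t$. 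Starting from the non-negative constant in $E$ and multiplying in all atoms, these relations place $p$ in $E$ when the number of linear (odd) atoms is even and in $O$ when it is odd; since every $E$-atom has even degree, this parity equals that of $\deg p$, which yields exactly the claimed even/odd case split.

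The main obstacle I expect is not existence of the representation but the \emph{sharp} degree bounds: showing $\deg s\leqslant 2k$ and $\deg t\leqslant 2k-2$ in the even case, and $\deg s,\deg t\leqslant 2k$ in the odd case. I would obtain these by carrying degree information through each closure identity, which is graded so that multiplying an element of degree $2k_1$ (resp. $2k_1+1$) by one of degree $2k_2$ (resp. $2k_2+1$) produces an element whose $s$ and $t$ still respect the bounds for the total degree, and by checking the boundary cases where leading terms cancel. This propagation argument, rather than the structural factorization, is where the care is required; it is also the step most cleanly organized by induction on $\deg p$ if one prefers not to track the entire product simultaneously.
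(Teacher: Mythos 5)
The paper does not actually prove this theorem: its ``proof'' is the single line ``See \cite{Polya1976}'', so there is no in-paper argument to compare against, and your blind attempt should be judged as a self-contained proof of the Markov--Luk\`acs representation. Judged that way, it is correct and follows the classical route: the ``if'' direction is the trivial positivity check; the ``only if'' direction factors $p$ into atoms (even-multiplicity interior roots give perfect squares, irreducible monic quadratics are sums of two squares, and each linear factor from a root outside $(a,b)$ is, up to a sign absorbed into the leading constant, a non-negative combination of $1$, $x-a$, $b-x$), and then the cone calculus $E\cdot E\subseteq E$, $E\cdot O\subseteq O$, $O\cdot O\subseteq E$ assembles the representation. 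Your displayed $O\cdot O$ identity is right, and the $E\cdot O$ case follows from $(s+wt)\bigl((x-a)\sigma+(b-x)\tau\bigr)=(x-a)\bigl(s\sigma+(b-x)^2t\tau\bigr)+(b-x)\bigl(s\tau+(x-a)^2t\sigma\bigr)$ with $w=(x-a)(b-x)$, which keeps both brackets SOS. Your concern about the degree bounds is the right place to be careful, but the propagation goes through cleanly: every atom satisfies the bounds (with $t=0$ for the SOS and quadratic atoms and constant $\sigma,\tau$ for the linear ones), each closure identity increases the degrees of $s$ and $t$ by exactly the degree of the incoming factor, and since leading coefficients of a product of nonzero real polynomials never cancel, the parameter $k$ is pinned to $\deg p$ as the statement requires; the ``leading terms cancel'' worry does not actually arise. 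Two small items to make explicit in a finished write-up: handle $p\equiv 0$ and non-negative constants separately, and justify that the absorbed constant is non-negative by evaluating at a point of $[a,b]$ where the atom product is strictly positive (such a point exists because the product has finitely many zeros). With those noted, your argument is sound and strictly more informative than the citation the paper offers.
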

\begin{proof}
	See \cite{Polya1976}. \qed
\end{proof}
\begin{theorem} \label{thm:sos}
Let $p(x)$ be a univariate polynomial of degree $2k$. Then, $p(x)$ is SOS if and only if there exists a $(k+1)\times (k+1)$ positive semi-definite matrix $\bm P$ that satisfies $$p(x) = [x]_k^T\bm P[x]_k$$ where, $[x]_k = [1~x~x^2 \dots x^k]^T$.
\end{theorem}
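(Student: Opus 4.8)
The plan is to prove the two implications separately, in each case translating between a sum-of-squares representation of $p$ and a symmetric-matrix factorization mediated by the monomial vector $[x]_k$. The single fact I would use repeatedly is that any polynomial $q(x)$ of degree at most $k$ is represented uniquely as $q(x) = \bm q^T [x]_k$ for a coefficient vector $\bm q \in \mathbb{R}^{k+1}$, so that $q(x)^2 = [x]_k^T (\bm q \bm q^T) [x]_k$. This is what forces the Gram matrix to have the stated size $(k+1)\times(k+1)$, matching the length of $[x]_k$.

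For the direction that assumes the matrix representation, suppose $\bm P \succeq 0$ and $p(x) = [x]_k^T \bm P [x]_k$. Because $\bm P$ is symmetric positive semi-definite it admits a factorization $\bm P = \bm L^T \bm L$ (equivalently a spectral decomposition $\bm P = \sum_i \lambda_i \bm v_i \bm v_i^T$ with $\lambda_i \geqslant 0$). Substituting gives $p(x) = \norm{\bm L [x]_k}^2 = \sum_i \big((\bm L [x]_k)_i\big)^2$, and each entry $(\bm L [x]_k)_i$ is a linear combination of $1, x, \dots, x^k$, hence a polynomial of degree at most $k$. This exhibits $p$ as a finite sum of squares, so $p$ is SOS.

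For the converse I would start from an SOS decomposition $p = \sum_{i=1}^m q_i^2$. The first step is the degree bound $\operatorname{degree}(q_i) \leqslant k$: since the squares contribute strictly positive leading coefficients that cannot cancel, $\operatorname{degree}\big(\sum_i q_i^2\big) = 2\max_i \operatorname{degree}(q_i)$, and $\operatorname{degree}(p) = 2k$ forces each $q_i$ to have degree at most $k$. Writing $q_i(x) = \bm q_i^T [x]_k$ and collecting terms yields
\[
p(x) = \sum_{i=1}^m [x]_k^T \bm q_i \bm q_i^T [x]_k = [x]_k^T \Big( \sum_{i=1}^m \bm q_i \bm q_i^T \Big) [x]_k,
\]
so setting $\bm P := \sum_{i=1}^m \bm q_i \bm q_i^T$ produces a symmetric matrix that is a sum of rank-one positive semi-definite terms and is therefore positive semi-definite, which closes the equivalence.

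The individual manipulations are routine; the one point that demands a little care is the degree argument in the converse, since it is what guarantees the SOS summands lie in the span of $\{1, x, \dots, x^k\}$ and hence that $\bm P$ has exactly the size claimed. I would also remark that $\bm P$ is in general not unique, because distinct symmetric matrices can represent the same polynomial through $[x]_k$ owing to the identities $x^i \cdot x^j = x^{i+j}$; however, only existence is asserted in the statement, so this non-uniqueness does not affect the proof.
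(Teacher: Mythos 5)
Your proof is correct and complete. The paper itself gives no argument for this theorem --- it simply defers to the cited reference \cite{Parrilo2003} --- so there is nothing to compare against; what you have written is the standard Gram-matrix proof (factor $\bm P = \bm L^T\bm L$ in one direction, assemble $\bm P = \sum_i \bm q_i \bm q_i^T$ in the other), which is essentially the argument found in that reference. You were right to single out the degree bound $\operatorname{degree}(q_i) \leqslant k$ as the one step needing care: the non-cancellation of the positive leading coefficients of the squares is exactly what guarantees the summands lie in the span of $\{1, x, \dots, x^k\}$ and hence that a $(k+1)\times(k+1)$ Gram matrix suffices.
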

\begin{proof}
See \cite{Parrilo2003}.
\end{proof}
Let $\phi(x) = a_0 + a_1 x + \dots + a_n x^n$ be an $n$\textsuperscript{th} degree polynomial whose coefficients are to be determined. Then, we have
\begin{flalign}
	\phi'(x) = a_1 + 2a_2 x + \dots na_n x^{n-1} = \sum_{i=1}^n ia_i x^{i-1}. \label{eq:phi-derivative}
\end{flalign} 
The constraint \eqref{eq:phi1} requires $\phi'(x)$ to be non-negative on $[D^l, D^u]$. Suppose that the degree of $\phi$ is even (the case when $\phi$ has an odd degree has a similar reduction and hence, is not presented). Then by theorem \ref{thm:ML}, we have 
\begin{flalign}
	\phi'(x) = (x-D^l) s(x) + (D^u-x) t(x) \label{eq:phi-ml}
\end{flalign}
where, $s(x)$ and $t(x)$ are SOS. 
Let $\operatorname{degree}(\phi') = 2k+1$, then degree of $s(x)$ and $t(x)$ is at most $2k$. Then by theorem \ref{thm:sos}, we have 
\begin{flalign}
	s(x) = [x]_k^T \bm S [x]_k,~ t(x) = [x]_k^T \bm T [x]_k \label{eq:sos} 
\end{flalign}
where, $\bm S$ and $\bm T$ are $(k+1) \times (k+1)$ positive semi-definite matrices. Combining \eqref{eq:phi-derivative}, \eqref{eq:phi-ml} and \eqref{eq:sos}, we obtain:
\begin{flalign*}
	\sum_{i=1}^{2k+1} ia_i x^{i-1} = (x-D^l)[x]_k^T \bm S [x]_k + (D^u-x)[x]_k^T \bm T [x]_k.
\end{flalign*}
Indexing the rows and columns of $\bm S$ and $\bm T$ by $\{0,1,\dots,k \}$ and equating the coefficients of the $M$\textsuperscript{th} power of $x$ on both sides of the above equation, we obtain a set of $(2k+1)$ linear equations relating the coefficients of the polynomial $\phi(\cdot)$ and then entries of the positive semi-definite matrices $\bm S$ and $\bm T$ as follows:
\begin{flalign}
	(M+1) a_{M+1} = \sum_{0\leqslant i,j \leqslant k}^{i+j = M-1} \left(\bm Q_{ij} - \bm T_{ij}\right) + \notag \\
	\qquad \qquad \sum_{0\leqslant i,j \leqslant k}^{i+j = M} \left(D^u \bm T_{ij} - D^l\bm S_{ij}\right). \label{eq:coeff}
\end{flalign}
Hence, an equivalent SDP for computing the function $\phi$, which is approximated by a degree $n$ polynomial is given by:
\begin{flalign*}
&(\mathcal L_5) ~ \min ~\sum_{k \in K} (\phi(D_k^u) - d_k), &\\
& \text{ subject to: } \eqref{eq:coeff}, \eqref{eq:phi2}, \text{ and } \bm S, \bm T \succeq 0. &
\end{flalign*}  
Constraints \eqref{eq:coeff} and the positive semi-definiteness of $\bm S, \bm T$ together is equivalent to enforcing the polynomial $\phi$ to be increasing; the constraints \eqref{eq:coeff} and \eqref{eq:phi2} are linear constraints.
%
%
%
\subsection{Estimation of the orientation of the vehicle from the location estimates of the on-board receivers \label{subsec:orietation}}
Suppose $\mathcal F$ is a frame of reference attached to the rigid body with its origin at $O$ and unit vectors $\hat{\imath},\hat{\jmath},\hat{k}$, respectively. Let the coordinates of the vehicle's on-board receivers in $\mathcal F$ be $\bm w_j = (a_j,b_j,c_j),$ $j \in J$, respectively and its estimated location in the ground frame be $\bm r_{c,j} = (x_{c,j},y_{c,j},z_{c,j})$. Let $\bm R$ be the rotation matrix associated with the body describing its orientation. Let $\bm r_0 = (x_0,y_0,z_0)$ denote the estimate of the location of the origin $O$ of the body frame $\mathcal F$. Then, it is clear that the following rigid body motion constraints must hold when there is no estimation error in the location of the on-board receivers:
\begin{flalign}
\bm r_{c,j} = \bm r_0 + \bm R \bm w_j, ~\forall j\in J. \label{eq:rigidbodyconstraints-a}
\end{flalign}
Essentially, these constraints guarantee that the angles between line segments joining the receivers as inferred from the location estimates will remain the same as their true values and the distance between the receivers as inferred from their locations will remain the same as their true values. Compactly, one can rewrite the above equation as:
\begin{flalign}
\begin{bmatrix}
\bm r_{c,1} \cdots \bm r_{c,L}
\end{bmatrix} = 
\begin{bmatrix}
\bm r_0 \cdots \bm r_0 
\end{bmatrix} + \bm R 
\begin{bmatrix}
\bm w_1  \cdots \bm w_L
\end{bmatrix}. \label{eq:rigidbodyconstraints-b}
\end{flalign}
However, the estimates may not satisfy the above relationship due to errors in measurements and subsequent location estimation of on-board sensors. In particular, the estimate of the distance between the on-board receivers need not equal the actual distance between them. As a consequence, the relative configuration of the on-board receivers indicated by their location estimates will not be the same as the true relative configuration of receivers. One then needs to correct these location estimates in order to ensure that the distance between the on-board receivers is its true value. Since the errors in the location estimates will be non-zero, let us define an error matrix, $\bm E$ as:
\begin{flalign}
\bm E:= 
\begin{bmatrix}
\bm r_{c,1} \cdots \bm r_{c,L}
\end{bmatrix} -
\begin{bmatrix}
\bm r_0 \cdots \bm r_0 
\end{bmatrix} - \bm R 
\begin{bmatrix}
\bm w_1  \cdots \bm w_L
\end{bmatrix}. \label{eq:error}
\end{flalign}
The problem of localization can now be posed as 
\begin{flalign*}
(\mathcal L_6) \quad J = \min_{\bm r_0 \in \Re^3, \bm R \in SO(3)} \operatorname{trace}(\bm E^T \bm E) \text{ subject to \eqref{eq:error}.}	
\end{flalign*}
In the above formulation $\mathcal L_6$, $\operatorname{trace}(\bm E^T \bm E)$ is the square of the Frobenius norm of the error matrix $\bm E$, and $SO(3) = \{\bm R: \operatorname{det}(\bm R) = 1, \bm R^{-1} = \bm R^T\}$ is referred to as the `Special Orthogonal Group'.

\section{Algorithms \label{sec:algorithms}}
In this section, we focus on solving the relevant optimization problems from the previous section. In section \ref{subsec:location}, we will outline a cutting plane algorithm to solve the formulations $\mathcal{L}_2$. In section \ref{subsec:orientation}, we will provide a solution procedure to solve the problem of localization given by the formulation $\mathcal L_6$ and thereby determine the optimal orientation that minimizes the square of the Frobenius norm of the error matrix, $\bm E$. As for the formulation $\mathcal L_4$ and $\mathcal L_5$, they can be solved to optimality using off-the-shelf semi-definite solvers like SCS \cite{SCS2013}.

\subsection{Location estimation procedure \label{subsec:location}}
\subsubsection{Algorithm to estimate the Chebychev center \label{subsubsec:CC-CP}}
The procedure involves a relaxation of the semi-infinite LP in the formulation $\mathcal L_2$ to a finite LP by ignoring all but finite constraints and providing an iterative way of adding the required constraints from the dropped set of constraints. This generic procedure is referred to as a cutting-plane method (see \cite{Boyd2004}).

To that end, let $\bm v_1,\dots,\bm v_M$ be unit vectors representing the $M$ sides of a circumscribing polygon of the feasible region, then a relaxation of $\mathcal L_2$ is given by
\begin{flalign*}
&\bar{l}_{\max} = \max_{l, \bm r_{c,j}} ~ l, \text{ subject to: } & \\
& \bm v_k \cdot \bm r_{c,j} + l  \norm{\bm v_k} \leqslant  \bm v_k \cdot \bm B_i + \phi(D_{ij}), & \\
& \qquad \qquad \qquad \qquad \qquad ~\forall i \in I,~ k=1,..,M. &
\end{flalign*}
Clearly, the feasible set of this LP, $\bar{\mathcal F}_{j}$, contains the feasible set $\mathcal F_j$ of the original problem as all by finite constraints of the original semi-infinite LP have been dropped. Suppose the optimal solution, $(\bar{l}_{\max}, \bar{\bm r}_{c,j})$, of the relaxed finite LP satisfies semi-infinite constraints; then it is clear that $(\bar{l}_{\max}, \bar{\bm r}_{c,j})$ is optimal for $\mathcal L_2$. Otherwise, for some unit vector $\bm v_{M+1}$ distinct from those considered before, and for some $i$, the following inequality holds: $$ \bm v_{M+1} \cdot \bar{\bm r}_{c,j} + \bar{l}_{\max}  >  \bm v_{M+1} \cdot \bm B_i + \phi(D_{ij}).$$ Such a unit vector $\bm v_{M+1}$ can be easily computed by maximizing the function $\left[\bm v \cdot (\bar{\bm r}_{c,j} - \bm B_i)\right]$ over the unit ball $\norm{\bm v} = 1$ and checking if the optimum objective is strictly greater than $\phi(D_{ij}) - \bar{l}_{\max}$. The maximization problem can trivially be solved using Cauchy-Schwartz inequality. By adding the ``cut'' $$ \bm v_{M+1} \cdot \bm r_{c,j} + l \leqslant  \bm v_{M+1} \cdot \bm B_i + \phi(D_{ij}),$$ which must be satisfied by the optimal solution for the semi-infinite LP and violated by the previously obtained optimal solution for the finite LP, we improve the solution. This is akin to finding another face of the polygon circumscribing the disk that cuts off a vertex of the previously obtaining polygon. This cutting plane method can be used along with an off-the shelf LP solver to solve the semi-infinite LP to arbitrary accuracy.

\subsection{Procedure for orientation estimation and correcting the location estimates taking into account the rigid body constraints} \label{subsec:orientation}
The location estimates of the on-board receivers have been obtained without regard to the rigid body motion constraints between them. Since receivers are attached to the rigid body, the distance between any pair of them is pre-specified. The estimates may not satisfy the distance constraints and even the angle between line segments joining the receivers computed from their location estimates may not correspond to their true values. For this reason, a correction procedure for the location estimates is required. Fortunately, this pursuit involves the estimation of orientation of the body. 

Let, $$ \bm e_j := \bm r_{c,j} - \bm r_0 - \bm R \bm w_j, ~j=1,\dots,L. $$ The term $\bm e_j$ describes the error in the estimate of the location to maintain a rigid body constraint for the $j$\textsuperscript{th} receiver and is a function of the the location $\bm r_0$ of the origin of the body frame and the rotation matrix $\bm R$ that describes the orientation of the rigid body. One can observe that $\operatorname{trace}(\bm E^T \bm E) = \sum_{j=1}^L \bm e^T_j \bm e_j$ and $\bm e_j$ is a linear function of $\bm r_0 = (x_0 , y_0 , z_0)$ and $\bm R$. Hence, minimization over $\bm r_0, \bm R$ can be performed sequentially since there are no constraints if we explicitly express $\operatorname{trace}(\bm E^T \bm E)$ as a function of $\bm r_0, \bm R$. Let $$ \bar{\bm w} = \frac 1L \sum_{j=1}^L \bm w_j, ~ \bar{\bm r} = \frac 1L \sum_{j=1}^L \bm r_{c,j}.$$ Minimization of $\operatorname{trace}(\bm E^T \bm E)$ with respect to $\bm r_0$ yields $$ \bm r_0 = \frac 1L \sum_{j=1}^L \left[ \bm r_{c,j} - \bm R \bm w_j\right] = \bar{\bm r} - \bm R \bar{\bm w}.$$ Define for $j\in J$ $$ \tilde{\bm r}_{c,j} := \bm r_{c,j} - \bar{\bm r}, ~ \tilde{\bm w}_j := \bm w_j - \bar{\bm w}. $$ With these definitions and the optimizing value of $\bm r_0$, $$ \bm e_j = \tilde{\bm r}_{c,j} - \bm R \tilde{\bm w}_j, ~j \in J.$$ 
Correspondingly, 
\begin{flalign*}
	& \operatorname{trace}(\bm E^T \bm E) = \sum_{j=1}^L (\tilde{\bm r}_{c,j} - \bm R \tilde{\bm w}_j)^T (\tilde{\bm r}_{c,j} - \bm R \tilde{\bm w}_j) &\\
	& = \sum_{j=1}^L \left(\tilde{\bm r}_{c,j}^T \tilde{\bm r}_{c,j} + \tilde{\bm w}_j^T \tilde{\bm w}_j \right) - 2\operatorname{trace}\left(\left(\sum_{j=1}^L \tilde{\bm w}_j \tilde{\bm r}_{c,j}^T\right)\bm R\right).&
\end{flalign*}
Define $\bm W := \sum_{j=1}^L \tilde{\bm w}_j \tilde{\bm r}_{c,j}^T$ so that $$\bm R^* = \argmin_{\bm R \in SO(3)} \operatorname{trace}(\bm E^T \bm E) = \argmax_{\bm R \in SO(3)} \operatorname{trace}(\bm W\bm R). $$This stems from the other terms being independent of $\bm R$. 

Let the singular value decomposition of $\bm{W} = \bm{U\Sigma} \bm V^T$ where $\bm U$, $\bm V$ are the left and right singular vectors of $\bm W$, respectively and $\bm \Sigma$ is a $3 \times 3$ diagonal matrix consisting of its singular values. The problem of maximizing $\operatorname{trace}(\bm W \bm R)$ over the set of all rotation matrices is referred to as the ``Orthogonal Procrustes problem'' (see \cite{Schoenemann1966}).

\begin{theorem} 
$\bm R^* = \bm V \bm U^T$ maximizes $\operatorname{trace}(\bm W \bm R)$ over the set of all proper rotation matrices.
\end{theorem}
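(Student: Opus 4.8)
The plan is to exploit the singular value decomposition $\bm W = \bm U \bm \Sigma \bm V^T$ together with the cyclic invariance of the trace to reduce the constrained maximization to a transparent diagonal problem. First I would write $\operatorname{trace}(\bm W \bm R) = \operatorname{trace}(\bm U \bm \Sigma \bm V^T \bm R) = \operatorname{trace}(\bm \Sigma \bm V^T \bm R \bm U)$ and introduce the auxiliary matrix $\bm Z := \bm V^T \bm R \bm U$. Since $\bm U$ and $\bm V$ are orthogonal (being singular-vector matrices) and $\bm R \in SO(3)$ is orthogonal, $\bm Z$ is itself orthogonal; moreover $\det \bm Z = \det \bm V \cdot \det \bm R \cdot \det \bm U = \det(\bm U \bm V^T)$. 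The objective then takes the form $\operatorname{trace}(\bm \Sigma \bm Z) = \sum_{i=1}^3 \sigma_i Z_{ii}$, where $\sigma_i \geq 0$ are the singular values collected in $\bm \Sigma$.

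Next I would bound each diagonal entry. Because $\bm Z$ is orthogonal its columns are unit vectors, so $|Z_{ii}| \leqslant 1$ for every $i$; combined with $\sigma_i \geqslant 0$ this yields $\operatorname{trace}(\bm \Sigma \bm Z) = \sum_i \sigma_i Z_{ii} \leqslant \sum_i \sigma_i = \operatorname{trace}(\bm \Sigma)$. This upper bound depends only on $\bm W$ and not on $\bm R$, so it suffices to exhibit a feasible $\bm R$ attaining it. Equality forces $Z_{ii} = 1$ whenever $\sigma_i > 0$; for an orthogonal matrix $Z_{ii} = 1$ means its $i$th column is the $i$th standard basis vector, and propagating this through the orthogonality relations gives $\bm Z = \bm I$ in the generic case of positive singular values. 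Unwinding the definition, $\bm V^T \bm R \bm U = \bm I$ is equivalent to $\bm R = \bm V \bm U^T$, which is exactly the claimed maximizer.

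The remaining, and main, obstacle is the feasibility constraint $\bm R \in SO(3)$: the candidate $\bm V \bm U^T$ is orthogonal by construction, but $\det(\bm V \bm U^T) = \det(\bm U \bm V^T) = \pm 1$, and the step $\bm Z = \bm I$ is admissible only when this determinant equals $+1$. I would therefore verify that, for the matrix $\bm W$ arising from a genuine rigid-body configuration, $\det(\bm U \bm V^T) = 1$, so that $\bm V \bm U^T$ is a proper rotation and $\bm Z = \bm I$ is an allowed orthogonal matrix of unit determinant. Were $\det(\bm U \bm V^T) = -1$, the choice $\bm Z = \bm I$ would be inadmissible and one would instead maximize $\sum_i \sigma_i Z_{ii}$ over determinant-one orthogonal $\bm Z$, flipping the sign along the smallest singular direction; this is the one place where the clean formula $\bm R^* = \bm V \bm U^T$ can fail, so I would flag the nondegeneracy/orientation hypothesis under which the theorem is stated. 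Finally I would substitute $\bm R^* = \bm V \bm U^T$ back into the objective to confirm $\operatorname{trace}(\bm W \bm R^*) = \operatorname{trace}(\bm \Sigma) = \sum_i \sigma_i$, which matches the upper bound and closes the argument.
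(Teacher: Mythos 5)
Your proposal is correct and follows essentially the same route as the paper: apply the SVD $\bm W = \bm U \bm \Sigma \bm V^T$, use cyclic invariance of the trace to reduce the objective to $\sum_i \sigma_i Z_{ii}$ for an orthogonal matrix $\bm Z$, bound each diagonal entry by $1$, and identify the maximizer with $\bm Z = \bm I$, i.e.\ $\bm R^* = \bm V\bm U^T$. Two small points in your favor. First, your permutation $\operatorname{trace}(\bm U\bm\Sigma\bm V^T\bm R) = \operatorname{trace}(\bm\Sigma\,\bm V^T\bm R\,\bm U)$ is the legitimate cyclic shift; the paper instead writes $\operatorname{trace}(\bm U\bm V^T\bm R\,\bm\Sigma)$, which is not a cyclic permutation of the original product (it happens to lead to the same final formula, but the intermediate identity as stated does not hold in general). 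Second, you correctly flag that $\bm Z = \bm I$ is only feasible when $\det(\bm V\bm U^T) = +1$; when $\det(\bm U\bm V^T) = -1$ the constrained maximizer over $SO(3)$ is $\bm V\operatorname{diag}(1,1,-1)\bm U^T$, flipping the sign along the smallest singular direction. The paper's proof silently assumes the favorable sign, so your version is the more careful one; in practice the hypothesis holds when the estimated receiver configuration is not too badly distorted from the true rigid-body configuration, but it deserves to be stated.
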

\begin{proof} We have due to the property of $\operatorname{trace}$ of a product of matrices:
\begin{flalign*}
	\operatorname{trace}(\bm W \bm R) = \operatorname{trace} (\bm U \bm \Sigma \bm V^T \bm R) = \operatorname{trace}(\bm U \bm V^T \bm R \bm \Sigma).
\end{flalign*}	
Note that $\bm V$, $\bm R$ and $\bm U$ are all orthonormal matrices, so $\bm X = \bm U \bm V^T \bm R$ is also an orthonormal matrix. Suppose $x_{ii}$, $i=1,2,3$ denotes the diagonal elements of $\bm X$, then 
\begin{flalign*}
	\operatorname{trace}(\bm W \bm R) = \operatorname{trace}(\bm X \bm \Sigma ) = \sum_{i=1}^3 \sigma_i x_{ii} \leq \sum_{i=1}^3 \sigma_i
\end{flalign*}
where, $\sigma_i$, $i=1,2,3$ are the singular values of $\bm W$. The maximum value in the above equation is achieved when $\bm X = \bm U \bm V^T \bm R = \bm I$ \emph{i.e.}, $\bm R^* = \bm V \bm U^T$.
\end{proof}

\begin{remark} The minimum value of $$\operatorname{trace}(\bm E^T \bm E) = \sum_{j=1}^L \left(\tilde{\bm r}_{c,j}^T \tilde{\bm r}_{c,j} + \tilde{\bm w}_j^T \tilde{\bm w}_j \right) - 2(\sigma_1 + \sigma_2 + \sigma_3).$$
\end{remark}

\begin{remark}
	The location estimate of the origin of the body frame is $\bm r_0 = \bar{\bm r} - \bm R^* \bar{\bm w}.$ If one places the on-board receivers in such a way that the ``center of mass'' of the rigid body (vehicle) coincides with the center of mass of the receivers, then the estimate of $\bm r_0$ is the ``corrected'' estimate of the location of the center of mass. The error in localization (both in the location estimation and orientation) from the ``best'' estimates of the locations of the receiver is given by the sum of the singular values of $\bm W$, a norm referred to as the nuclear norm of $\bm W$.
\end{remark}

\begin{remark}
	The updated estimate of the $j$\textsuperscript{th} receiver's location will be given by: $$\bm r_{c,j} = \bm r_0 + \bm R^* \bm w_j.$$
\end{remark}

\section{Computational and experimental results} \label{sec:results}
This section presents numerical simulations and experimental results to corroborate the effectiveness of the algorithms proposed in this article. All the simulations were performed on a Dell Precision Workstation with 12 GB RAM. The simulation setup is as follows: We assume that a set of LBL acoustic beacons are fixed underwater at predetermined locations (by either anchoring them to sea floor, or by suspending GIBs from the surface) in a manner that the vehicle's route is completely contained in the convex hull of the positions of the acoustic beacons. The algorithms presented in this article are valid only under this assumption. The coordinates of the positions of the beacons with respect to an inertial frame of reference is assumed to be known a priori. The beacon locations and the convex hull of the beacon positions are shown in the figure \ref{fig:beacons}. The AUV to be localized is equipped with four on-board receivers that are fixed at the origin and the three unit vectors of the vehicle reference frame; without loss of generality, the origin of the vehicle reference frame is assumed to be located at the center of mass of the vehicle. Now, given the range measurements of all the on-board receivers from the beacons at various time instances, the problem is to determine the position of the center of mass of the vehicle and the vehicle's orientation. We remark that though the simulation setup might not correspond to a realistic scenario, we use such a setup to test the robustness and strength of the method; hence, in practice, our algorithms can have a better performance than the ones presented in the numerical simulations. Nevertheless, experimental results that corroborate the effectiveness of the proposed algorithms are also presented. 

\begin{figure}
\centering
\includegraphics[scale=0.5]{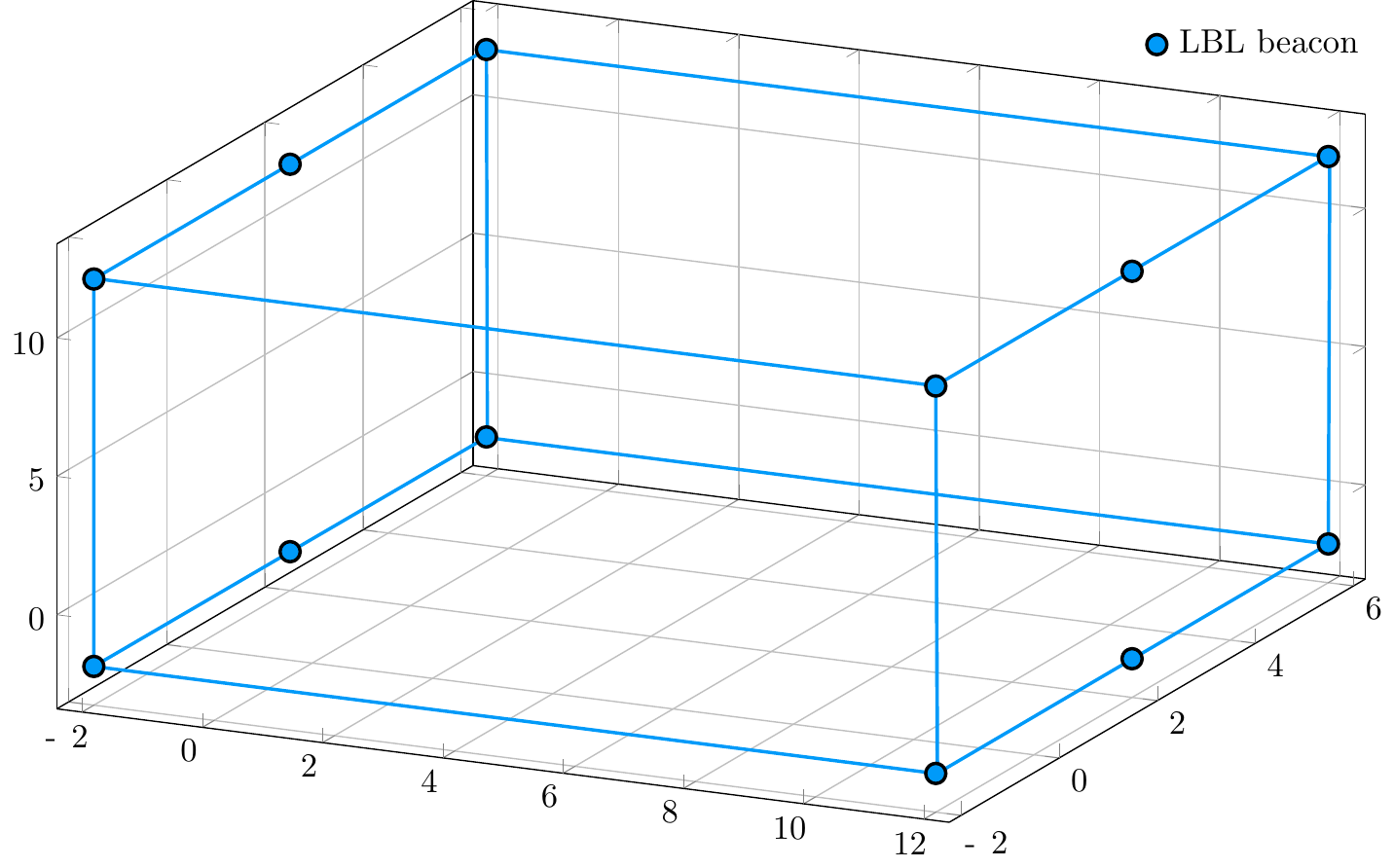}
\caption{Beacon positions are shown. The vehicle is assumed to move along a path completely contained in the convex hull.}
\label{fig:beacons}
\end{figure}

\subsection{Simulation of range measurements} \label{subsec:range_simulation}
In reality, the range measurements are available from the {One Way Travel Time (OWTT) of pulses} that are transmitted from the beacons and received by the on-board receivers on the vehicle. In the current example, these range values are numerically simulated using the following procedure: the vehicle is assumed to follow a particular trajectory in which the coordinates of the on-board receiver located at the center of mass of the vehicle at any time instant $t$ is given by $(2.5(1+\cos t), 2.5 \sin t , 5 \sin(t/2))$; the other three on-board receivers are assumed to move along the tangent, normal, and bi-normal directions of the curve. These directions are unique at each instant of time and are calculated using the Frenet-Serret equations. The range measurements from each beacon to each on-board receiver is then generated by adding a white Gaussian noise with mean $0$ and variance $0.25$ to the true values. $100$ such measurements for each beacon-receiver pair are generated as the vehicle travels along its path. Using these range measurements the estimate of the position of each on-board receiver is computed using the algorithm presented in \ref{subsec:location}. The estimate of the position of the vehicle is then provided by the position estimate of the on-board receiver located at the center of mass of the vehicle. In the following section, we shall analyze the performance of the position estimation algorithms. 

\subsection{Position estimation algorithm performance}
$\phi(\cdot)$ enables the conversion of range measurements obtained using each beacon to the radii of the spheres centered on the beacons and that contain location of the on-board receiver. For the purpose of this section, we assume that the function $\phi(\cdot)$ is computed offline. The intersection of all the spheres is the set of all possible receiver locations. For the purpose of analysing the performance of the position estimation algorithms, we concern ourselves only with the position of the on-board receiver located at the center of mass of the vehicle. The Chebychev center of the feasible region is computed using the cutting-plane algorithm that is proposed in section \ref{subsubsec:CC-CP} using an LP for each set of range measurements and the center of the maximum volume ellipsoid that can be inscribed in this feasible region is computed directly by solving an SDP, derived in section \ref{subsubsec:mve}, using SCS \cite{SCS2013} -- an off-the-shelf commercial SDP solver.

The average number of cuts, over all the $100$ runs, added by the cutting-plane algorithm to compute the Chebychev center was $29$ and the average computation time was $0.0079$ seconds. The figure \ref{fig:cheb_error} shows the error, as a percentage of the maximum range measurement, in the position estimate of the center of the mass of the vehicle using the cutting plane algorithm for the Chebychev center. The average error was found to be $1.55\%$ and the maximum error was $5.73\%$.

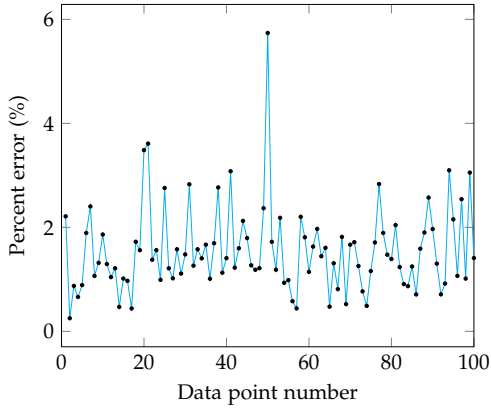
\begin{figure}[ht]
\centering
\begin{tikzpicture}[scale=0.8]
\begin{axis}[xlabel = {Data point number}, ylabel = {Percent error ($\%$)}, xmax = {100}, xmin = {0}]\addplot+ [cyan, mark options={black}, mark size = {0.8}]coordinates {
(1.0, 2.21195808)
(2.0, 0.2517366)
(3.0, 0.87413181)
(4.0, 0.66058848)
(5.0, 0.88902752)
(6.0, 1.8925786)
(7.0, 2.40228758)
(8.0, 1.06723756)
(9.0, 1.31817135)
(10.0, 1.86235598)
(11.0, 1.29460017)
(12.0, 1.04267698)
(13.0, 1.2128971)
(14.0, 0.47036843)
(15.0, 1.0155559)
(16.0, 0.97173525)
(17.0, 0.44016109)
(18.0, 1.72276375)
(19.0, 1.5607416)
(20.0, 3.48489747)
(21.0, 3.61077762)
(22.0, 1.37796333)
(23.0, 1.56195097)
(24.0, 0.9907436)
(25.0, 2.75737692)
(26.0, 1.21236498)
(27.0, 1.01971911)
(28.0, 1.57868761)
(29.0, 1.11129289)
(30.0, 1.4795011)
(31.0, 2.82792563)
(32.0, 1.26333262)
(33.0, 1.57831418)
(34.0, 1.40318617)
(35.0, 1.66659893)
(36.0, 1.01168947)
(37.0, 1.69439743)
(38.0, 2.76690026)
(39.0, 1.12755008)
(40.0, 1.40755847)
(41.0, 3.07873944)
(42.0, 1.22534658)
(43.0, 1.59769614)
(44.0, 2.12406759)
(45.0, 1.79362739)
(46.0, 1.26922725)
(47.0, 1.18406261)
(48.0, 1.21485733)
(49.0, 2.36834044)
(50.0, 5.73847009)
(51.0, 1.72139739)
(52.0, 1.18495468)
(53.0, 2.18352915)
(54.0, 0.93400543)
(55.0, 0.98547804)
(56.0, 0.57955395)
(57.0, 0.44127922)
(58.0, 2.2022525)
(59.0, 1.80882182)
(60.0, 1.14455568)
(61.0, 1.63025878)
(62.0, 1.96999362)
(63.0, 1.44427213)
(64.0, 1.60576943)
(65.0, 0.47481402)
(66.0, 1.30916851)
(67.0, 0.81307573)
(68.0, 1.81645253)
(69.0, 0.52378164)
(70.0, 1.66686233)
(71.0, 1.71504118)
(72.0, 1.25614789)
(73.0, 0.76882191)
(74.0, 0.48992216)
(75.0, 1.1594055)
(76.0, 1.70930122)
(77.0, 2.83279454)
(78.0, 1.89235579)
(79.0, 1.4748645)
(80.0, 1.39140498)
(81.0, 2.04322789)
(82.0, 1.23590536)
(83.0, 0.90885416)
(84.0, 0.86908438)
(85.0, 1.24728166)
(86.0, 0.71097995)
(87.0, 1.59000984)
(88.0, 1.90075172)
(89.0, 2.5729736)
(90.0, 1.96710859)
(91.0, 1.30182683)
(92.0, 0.71092605)
(93.0, 0.91902795)
(94.0, 3.09687985)
(95.0, 2.15293375)
(96.0, 1.0680822)
(97.0, 2.54124566)
(98.0, 1.01490321)
(99.0, 3.05325883)
(100.0, 1.40990626)
};
\end{axis}

\end{tikzpicture}
\caption{Percentage error in the position estimate  of the center of mass of the vehicle computed using the cutting-plane algorithm for the Chebyshev center.}
\label{fig:cheb_error}
\end{figure}

As for the position estimate computed as the maximum volume ellipsoid center, the average computation time was found to be $0.14$ seconds. The figure \ref{fig:mve_error} shows the percentage error in the position estimate of the center of mass of the vehicle obtained by solving the SDP for the maximum volume ellipsoid center. The average error was found to be around $1.42\%$ and the maximum error was found to be $3.25\%$. 

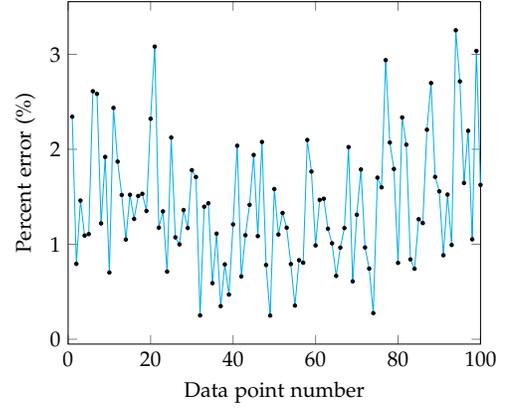
\begin{figure}[ht]
\centering
\begin{tikzpicture}[scale=0.8]
\begin{axis}[xlabel = {Data point number}, ylabel = {Percent error ($\%$)}, xmax = {100}, xmin = {0}]\addplot+ [cyan, mark options={black}, mark size = {0.8}]coordinates {
(1.0, 2.34362529)
(2.0, 0.79364334)
(3.0, 1.46047511)
(4.0, 1.09167615)
(5.0, 1.1070838)
(6.0, 2.61119814)
(7.0, 2.58380249)
(8.0, 1.21980059)
(9.0, 1.91995419)
(10.0, 0.70285695)
(11.0, 2.43598355)
(12.0, 1.87030063)
(13.0, 1.51950297)
(14.0, 1.0494994)
(15.0, 1.52101438)
(16.0, 1.26728953)
(17.0, 1.50863653)
(18.0, 1.53137018)
(19.0, 1.35055129)
(20.0, 2.32123018)
(21.0, 3.08088821)
(22.0, 1.17482525)
(23.0, 1.3464163)
(24.0, 0.71189919)
(25.0, 2.12370411)
(26.0, 1.07258646)
(27.0, 0.99888507)
(28.0, 1.36064868)
(29.0, 1.17177805)
(30.0, 1.78042328)
(31.0, 1.70798611)
(32.0, 0.25105977)
(33.0, 1.39563229)
(34.0, 1.43190577)
(35.0, 0.58987432)
(36.0, 1.11184784)
(37.0, 0.34890105)
(38.0, 0.78779468)
(39.0, 0.47033817)
(40.0, 1.20912533)
(41.0, 2.03768445)
(42.0, 0.66051453)
(43.0, 1.09508924)
(44.0, 1.41509929)
(45.0, 1.94070594)
(46.0, 1.08556002)
(47.0, 2.07742547)
(48.0, 0.78210655)
(49.0, 0.24928608)
(50.0, 1.5817989)
(51.0, 1.10315734)
(52.0, 1.32943585)
(53.0, 1.17454319)
(54.0, 0.79164579)
(55.0, 0.35444925)
(56.0, 0.83198245)
(57.0, 0.80570063)
(58.0, 2.09854119)
(59.0, 1.76547919)
(60.0, 0.98808273)
(61.0, 1.46780226)
(62.0, 1.4796922)
(63.0, 1.16357258)
(64.0, 1.01016833)
(65.0, 0.66754753)
(66.0, 0.96491498)
(67.0, 1.17056592)
(68.0, 2.02339995)
(69.0, 0.60873669)
(70.0, 1.31061146)
(71.0, 1.78761512)
(72.0, 0.96631921)
(73.0, 0.74397348)
(74.0, 0.27525533)
(75.0, 1.70131673)
(76.0, 1.59968514)
(77.0, 2.93902118)
(78.0, 2.07135269)
(79.0, 1.79302399)
(80.0, 0.80370905)
(81.0, 2.33545798)
(82.0, 2.04871487)
(83.0, 0.840395)
(84.0, 0.7422644)
(85.0, 1.26413576)
(86.0, 1.22311313)
(87.0, 2.20603018)
(88.0, 2.69710583)
(89.0, 1.70944474)
(90.0, 1.55738339)
(91.0, 0.88406251)
(92.0, 1.52349991)
(93.0, 0.99243367)
(94.0, 3.25315205)
(95.0, 2.71492893)
(96.0, 1.64524249)
(97.0, 2.19502867)
(98.0, 1.05198892)
(99.0, 3.03562734)
(100.0, 1.62493542)
};
\end{axis}

\end{tikzpicture}
\caption{Percentage error in the position estimate of the center of mass of the vehicle computed using the SDP for the maximum volume ellipsoid center.}
\label{fig:mve_error}
\end{figure}

Though the time taken for the computing the Chebyshev center is at least $10$ times lesser than the maximum volume ellipsoid center, the error performance for the maximum volume ellipsoid center based position estimate is better and hence, throughout the rest of the article, we will use the maximum volume ellipsoid center as the estimate for the position of an on-board receiver.

\subsection{Determination of $\phi(\cdot)$} \label{subsec:phi}
The sensing model, as detailed in section \ref{subsec:calibration}, assumes that the receiver always lies in the intersection of spheres centered at the beacons from which the range measurements corresponding to the receiver are obtained. For the purpose of simulation, we assume that the function $\phi(\cdot)$ is a polynomial of degree $4$ \emph{i.e.}, $\phi(x) = a_0 + a_1 x + a_2 x^2 + a_3 x^3 + a_4 x^4 \Rightarrow \phi'(x) = a_1 + a_2 x + a_3 x^2 + a_4 x^3$. For computation of the coefficients, data sets containing plausible true range values and multiple noisy range measurements corresponding to each true value are required. Experimentally, these data sets can be obtained by placing the receivers at different distances and recording the beacon range measurements multiple times. For the purpose of simulation these values were uniformly chosen from the interval $[d-\varepsilon, d+\varepsilon]$, where $d$ is a true range value and $\varepsilon$ is the maximum error in range measurements.  The permissible true distance values are restricted to a certain interval based on the beacon locations; the true range values were restricted to the interval $[4, 18]$ units. A set of $25$ values uniformly distributed in this interval are picked to form the set of true range values, and for each element in the set, $100$ corresponding range measurements are chosen from the interval $[d-\varepsilon, d+\varepsilon]$, with an $\varepsilon$ value of 0.25 units; this simulation procedure is set up to mimic the error present in actuality. Once the data set is available, the coefficients of the fitting polynomial can be obtained as a solution of an SDP, as discussed in section \ref{subsec:calibration}

Using the results in section \ref{subsec:calibration}, the constraint \eqref{eq:phi1} for the fourth degree polynomial can be re-written as follows:
 \begin{flalign*}
 & (A(x))^2  (x-D^l)+ (B(x))^2 (D^u - x)  \geqslant 0, \text{where } & \\
 & A(x) = [x]_k^T \bm P [x]_k,~B(x) = [x]_k^T \bm Q [x]_k, & \\
 & [x]_k = [1~x~x^2 \dots x^k]^T, & \\
 & \bm P = \begin{bmatrix} p_{11} & p_{12} \\ p_{12} & p_{22} \end{bmatrix} \succeq 0 \text{ and } \bm Q = \begin{bmatrix} q_{11} & q_{12} \\ q_{12} & q_{22} \end{bmatrix} \succeq 0. &
 \end{flalign*}
The above set of constraints upon simplification yields the following equations :
\begin{flalign*}
& a_1 = - D^l p_{11} + D^u q_{11} &\\
& 2 a_2 = p_{11} - 2 D^l p_{12} + 2 D^u q_{12} - q_{11} &\\
& 3 a_3 = 2P_{12} - D^l p_{22} + D^u q_{22} - 2 q_{12}  &\\
& 4 a_4 = p_{22} - q_{22}. & 
\end{flalign*}
Furthermore, the constraint \eqref{eq:phi2} for the fourth degree polynomial $\phi(\cdot)$ can be written as 
 $$a_0 + a_1 D^l_k + a_2 (D^l_k) ^2 + a_3(D^l_k)^3 + a_4(D^l_k)^4  \geqslant d_k , \forall k \in K, $$ 
and the objective function is equivalent to minimizing the function  $$  \sum_{k \in K} (a_0 + a_1 D^u_k + a_2 (D^u_k) ^2 + a_3(D^u_k)^3 + a_4(D^u_k)^4 - d_k).$$

For the simulation set discussed above, solving the SDP using SCS as an off-the-shelf commercial solver yields a solution. The computation time is of the order of milliseconds implying that this computation can be performed online. The error performance for the position estimation algorithm using this $\phi(\cdot)$ function that was computed online is shown in figures \ref{fig:cheb_error} and \ref{fig:mve_error} for the Chebyshev center and the maximum volume ellipsoid center, respectively.

The function $\phi(\cdot)$ can also be computed online, \emph{adaptively}, in the absence of multiple range measurements for a true range value using the technique of sensor fusion. This technique usually requires other sensors in place to estimate the position of the on-board receiver of the vehicle; the position estimates from these other sensors can be used as data sets to compute the function $\phi(\cdot)$. In this case, the position estimation algorithm would either result in a huge error initially which improves over time or would be infeasible; when either of the two situations occur, the function $\phi(\cdot)$ can be recomputed using the values from the alternate sensory equipment in place as the true position estimate and the noise contaminated range measurement as the reported range measurement. The position estimation algorithm  becomes infeasible if the range measurements do not lie within the calibration limits $[D^l, D^u]$ in \eqref{eq:phi1}. If the algorithm is unable to estimate the position at a particular instant, it uses the next best available estimate at the same instant to recalibrate itself. Then, the algorithm continues to give estimates till it becomes infeasible again, and the process continues. The figure \ref{fig:recalib} shows the error performance of the algorithm with and without online re-computation of the function. Initially, the errors are equal because the function $\phi(\cdot)$ did not change until data point $42$. Once infeasibility occurs, the online re-computation process changes $\phi(\cdot)$ and improves the error performance. The average percent error ignoring the points of infeasibility was observed to be  $5.13\%$ when the function $\phi(\cdot)$ was recomputed online.

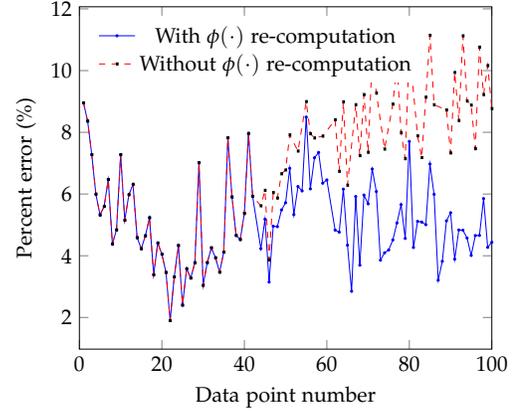
\begin{figure}[ht]
\centering
\begin{tikzpicture}[scale=0.8]
\begin{axis}[legend pos = {north west}, legend style={draw=none}, xlabel = {Data point number}, ylabel = {Percent error ($\%$)}, xmax = {100}, xmin = {0}]\addplot+ [mark size = {0.5}]coordinates {
(1.0, 8.95396663)
(2.0, 8.36237146)
(3.0, 7.27759947)
(4.0, 5.99496074)
(5.0, 5.32039522)
(6.0, 5.59862327)
(7.0, 6.4711425)
(8.0, 4.38300165)
(9.0, 4.83925516)
(10.0, 7.27662854)
(11.0, 5.15040659)
(12.0, 5.97954813)
(13.0, 6.31387461)
(14.0, 4.58586448)
(15.0, 4.22789749)
(16.0, 4.64762007)
(17.0, 5.23104906)
(18.0, 3.38480447)
(19.0, 4.41212826)
(20.0, 4.04967934)
(21.0, 3.45711782)
(22.0, 1.89792836)
(23.0, 3.3128417)
(24.0, 4.33089563)
(25.0, 2.3986077)
(26.0, 3.57906781)
(27.0, 3.27724877)
(28.0, 3.77497917)
(29.0, 7.01343945)
(30.0, 3.04167413)
(31.0, 3.77706144)
(32.0, 4.25578957)
(33.0, 3.93355063)
(34.0, 3.46855004)
(35.0, 4.11710025)
(36.0, 7.82333093)
(37.0, 5.90425228)
(38.0, 4.65861766)
(39.0, 4.52599002)
(40.0, 5.37123027)
(41.0, 7.9631301)
(42.0, 5.9260791)
(44.0, 4.22932698)
(45.0, 5.18355968)
(46.0, 3.14765706)
(47.0, 4.95614342)
(48.0, 4.94256969)
(49.0, 5.4837941)
(50.0, 5.71813718)
(51.0, 6.83220306)
(52.0, 5.33126812)
(53.0, 6.24258423)
(54.0, 6.10065838)
(55.0, 8.4940911)
(56.0, 6.16691451)
(57.0, 7.1753769)
(58.0, 7.346971)
(59.0, 6.35164803)
(60.0, 6.45611309)
(62.0, 4.83264464)
(63.0, 4.76458865)
(64.0, 6.15683506)
(65.0, 4.34146731)
(66.0, 2.84869808)
(67.0, 5.91841697)
(68.0, 3.69376926)
(69.0, 5.96537372)
(70.0, 5.68223267)
(71.0, 6.80980807)
(72.0, 6.0801976)
(73.0, 3.8573915)
(74.0, 4.09578759)
(75.0, 4.18947306)
(76.0, 4.51171982)
(77.0, 5.06112344)
(78.0, 5.65733288)
(79.0, 4.5611897)
(80.0, 7.70532613)
(81.0, 4.26692726)
(82.0, 5.11289666)
(83.0, 5.09185499)
(84.0, 5.00608147)
(85.0, 6.97350948)
(86.0, 5.99029233)
(87.0, 3.20770794)
(88.0, 3.81838235)
(89.0, 5.13189355)
(90.0, 5.39263168)
(91.0, 3.89058848)
(92.0, 4.83401135)
(93.0, 4.82624558)
(94.0, 4.57271763)
(95.0, 4.01208951)
(96.0, 4.65381641)
(97.0, 4.65784614)
(98.0, 5.85200679)
(99.0, 4.2726818)
(100.0, 4.43600932)
};
\addlegendentry{With $\phi(\cdot)$ re-computation}
\addplot+ [red, mark options={black}, dashed, mark size = {0.5}]coordinates {
(1.0, 8.95396663)
(2.0, 8.36237146)
(3.0, 7.27759947)
(4.0, 5.99496074)
(5.0, 5.32039522)
(6.0, 5.59862327)
(7.0, 6.4711425)
(8.0, 4.38300165)
(9.0, 4.83925516)
(10.0, 7.27662854)
(11.0, 5.15040659)
(12.0, 5.97954813)
(13.0, 6.31387461)
(14.0, 4.58586448)
(15.0, 4.22789749)
(16.0, 4.64762007)
(17.0, 5.23104906)
(18.0, 3.38480447)
(19.0, 4.41212826)
(20.0, 4.04967934)
(21.0, 3.45711782)
(22.0, 1.89792836)
(23.0, 3.3128417)
(24.0, 4.33089563)
(25.0, 2.3986077)
(26.0, 3.57906781)
(27.0, 3.27724877)
(28.0, 3.77497917)
(29.0, 7.01343945)
(30.0, 3.04167413)
(31.0, 3.77706144)
(32.0, 4.25578957)
(33.0, 3.93355063)
(34.0, 3.46855004)
(35.0, 4.11710025)
(36.0, 7.82333093)
(37.0, 5.90425228)
(38.0, 4.65861766)
(39.0, 4.52599002)
(40.0, 5.37123027)
(41.0, 7.9631301)
(42.0, 5.9260791)
(44.0, 5.61830977)
(45.0, 6.1189901)
(46.0, 3.86366934)
(47.0, 6.04695626)
(48.0, 5.87037805)
(49.0, 6.65426129)
(50.0, 6.77936684)
(51.0, 7.91466824)
(53.0, 7.39226532)
(55.0, 8.99545267)
(56.0, 7.95479863)
(57.0, 7.81614159)
(59.0, 7.87587474)
(61.0, NaN)
(62.0, 8.40817044)
(63.0, 6.73672844)
(64.0, 8.98913007)
(65.0, 6.28606445)
(67.0, 8.89614691)
(68.0, 7.24396502)
(69.0, 9.2215565)
(70.0, 7.35312309)
(71.0, 10.45093507)
(72.0, 9.27471183)
(74.0, 7.45667298)
(76.0, 8.91328399)
(77.0, 10.2651966)
(78.0, 7.99043683)
(79.0, 7.14458219)
(80.0, 10.71389756)
(82.0, 7.88580212)
(83.0, 7.18176609)
(84.0, 9.13902805)
(85.0, 11.14352496)
(86.0, 8.89370231)
(88.0, NaN)
(89.0, 8.72362781)
(90.0, 7.33053756)
(91.0, 9.93805057)
(92.0, 8.38261152)
(93.0, 11.12686566)
(94.0, 9.01304772)
(95.0, 8.88026457)
(96.0, 7.46896498)
(97.0, 10.76104922)
(98.0, 9.22298858)
(99.0, 10.16361934)
(100.0, 8.76924311)
};
\addlegendentry{Without $\phi(\cdot)$ re-computation}
\end{axis}

\end{tikzpicture}
\caption{Percentage error in the position estimate of the center of mass of the vehicle obtained using the maximum volume ellipsoid center using online $\phi(\cdot)$ estimation algorithm with and without re-computation}
\label{fig:recalib}
\end{figure}

\subsection{Errors in the the orientation estimates}
The receiver locations obtained from either the Chebyshev or maximum volume ellipsoid center estimates aid in estimating the positions of the all the $4$ on-board receivers; one in the center of mass of the vehicle and the other three along the unit vectors of the vehicle reference frame at unit distance from the origin of the frame. But these estimates need not conform to the rigid body constraints of the vehicle. Hence, these estimates are corrected using a orientation preserving rotation. This rotation is a proper orthogonal rotation computed via the solution to the Orthogonal Procrustes problem detailed in section \ref{subsec:orientation}. This rotation also provides an estimate of the orientation of the vehicle.  At each time step, the receiver locations were updated and the vehicle orientation is obtained as a rotation matrix $\bm R^{\dagger}$ using a singular value decomposition, as detailed in \ref{subsec:orientation}. Furthermore, in simulation, since we know the actual path the vehicle takes, we also know the actual orientation of the vehicle $\bm R$. The difference between the two rotations is then computed using a metric on $SO(3)$ that utilizes the idea of geodesic distance between $\bm{R}^{\dagger}$ and $\bm{R}$ \cite{Huynh2009} given by $$ d(\bm{R}, \bm{R}^{\dagger}) = \norm{ \operatorname{log}(\bm{R}^T \bm{R}^{\dagger})}_F$$ in $SO(3)$, where $\|\cdot\|_F$ denotes the Forbenius norm (see section \ref{subsec:orientation}); logarithm of a matrix is another matrix such that the matrix exponential of the latter matrix equals the original matrix. This distance is measured in radians. The figures \ref{fig:cheb_error_r} and \ref{fig:mve_error_r} show the errors in the orientation estimates when the Chebyshev center and the maximum volume center are used for position estimation, respectively; the average errors in degrees were $2.68$ and $1.87$, respectively.

\begin{figure}
\centering
\begin{tikzpicture}[scale=0.8]
\begin{axis}[xlabel = {Data point number}, ylabel = {Error in degrees}, xmax = {100}, xmin = {0}]\addplot+ [cyan, mark options={black}, mark size = {0.8}]coordinates {
(1.0, 4.176738567619953)
(2.0, 0.5957042195975168)
(3.0, 3.458033487437137)
(4.0, 3.2598433753968186)
(5.0, 2.225063262741108)
(6.0, 3.0582963036348296)
(7.0, 6.347771910280224)
(8.0, 4.193514771861384)
(9.0, 1.7304574460944246)
(10.0, 1.952802312861868)
(11.0, 1.7233533423925975)
(12.0, 3.3045340834170225)
(13.0, 4.6233643255445855)
(14.0, 0.4190309645955422)
(15.0, 0.08902560924963218)
(16.0, 0.6116181223572755)
(17.0, 0.10596052280031391)
(18.0, 3.565641836856247)
(19.0, 1.788396657211439)
(20.0, 15.334162290249035)
(21.0, 8.91741657467537)
(22.0, 1.7044119306433676)
(23.0, 0.8616632066881132)
(24.0, 0.006936800025648878)
(25.0, 1.9079242476426554)
(26.0, 1.23337014923703)
(27.0, 0.7090971509162678)
(28.0, 3.8366182153588038)
(29.0, 3.5311096705437124)
(30.0, 0.6403055462016807)
(31.0, 6.358960057145743)
(32.0, 0.5332942188050969)
(33.0, 2.257914943840524)
(34.0, 2.505241916391261)
(35.0, 1.2714804369801567)
(36.0, 4.132384758783286)
(37.0, 1.127991751556569)
(38.0, 0.2875858520256044)
(39.0, 4.344937203874148)
(40.0, 0.14006010597752488)
(41.0, 0.4308344681330323)
(42.0, 2.323829727465759)
(43.0, 1.9198835957004214)
(44.0, 4.417604295115)
(45.0, 2.0077346414700354)
(46.0, 4.344944079367689)
(47.0, 8.994325272473576)
(48.0, 2.1395670735094816)
(49.0, 3.30750085888021)
(50.0, 3.204489349851435)
(51.0, 1.3079360226109455)
(52.0, 0.40192015300175526)
(53.0, 2.2950964669977436)
(54.0, 1.2512687777991216)
(55.0, 6.602692865447625)
(56.0, 1.5288278684099366)
(57.0, 0.4170181638612476)
(58.0, 3.628817309262962)
(59.0, 4.350543595899503)
(60.0, 0.9515809112247642)
(61.0, 0.19914981634716178)
(62.0, 2.230215872192719)
(63.0, 10.248547647706596)
(64.0, 0.38560002316524894)
(65.0, 0.7015805176019466)
(66.0, 0.7827588968894671)
(67.0, 1.2197658393494437)
(68.0, 1.7181560422329658)
(69.0, 6.806736314322999)
(70.0, 0.6705072974986167)
(71.0, 3.4493892731819975)
(72.0, 1.9816140685478165)
(73.0, 0.3762522167376895)
(74.0, 0.4082496177645655)
(75.0, 0.41437797434128476)
(76.0, 0.7754021187999872)
(77.0, 7.633642755242886)
(78.0, 5.149274837243832)
(79.0, 2.6884319933550533)
(80.0, 3.2684188347165417)
(81.0, 2.452146490474283)
(82.0, 2.043368605622607)
(83.0, 5.702956613273069)
(84.0, 0.03358220228820781)
(85.0, 3.662098708708726)
(86.0, 3.371979810270848)
(87.0, 3.277246968423918)
(88.0, 3.9336216889478375)
(89.0, 1.513570575283398)
(90.0, 4.009540315676057)
(91.0, 0.6759738878219599)
(92.0, 0.5399004221829553)
(93.0, 3.4714229381515485)
(94.0, 5.807832526967201)
(95.0, 1.478437949201566)
(96.0, 0.09980122650265758)
(97.0, 0.4252137521627989)
(98.0, 1.2848630122010274)
(99.0, 9.3227876524772)
(100.0, 0.33232239666941904)
};
\end{axis}

\end{tikzpicture}
\caption{Geodesic distance in $SO(3)$ between the estimated and actual rotation matrices using Chebyshev center as the position estimate for each on-board receiver.}
\label{fig:cheb_error_r}
\end{figure}
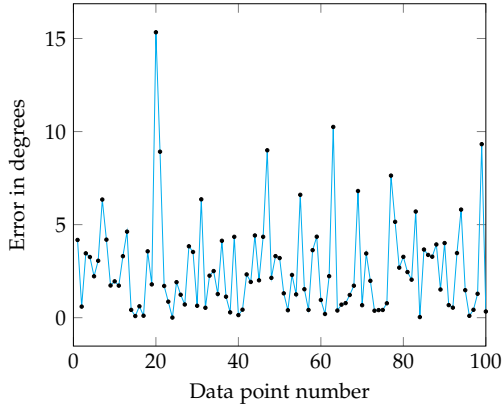

\begin{figure}
\centering
\begin{tikzpicture}[scale=0.8]
\begin{axis}[xlabel = {Data point number}, ylabel = {Error in degrees}, xmax = {100}, xmin = {0}]\addplot+ [cyan, mark options={black}, mark size = {0.8}]coordinates {
(1.0, 4.581304066761827)
(2.0, 0.34625443841582015)
(3.0, 1.462262777687228)
(4.0, 0.5607629614192587)
(5.0, 2.142515514323225)
(6.0, 2.06889444835348)
(7.0, 1.418795334559628)
(8.0, 2.5105131281064645)
(9.0, 0.07575877150337797)
(10.0, 1.9425343362153282)
(11.0, 3.028827938315661)
(12.0, 1.1030663685949926)
(13.0, 3.334538737232434)
(14.0, 1.7645868230769826)
(15.0, 0.18423400606652107)
(16.0, 0.9909396740034809)
(17.0, 0.15435368409265351)
(18.0, 3.266807677396634)
(19.0, 0.6214999254498969)
(20.0, 1.2855333728213305)
(21.0, 1.4576854178619278)
(22.0, 1.6866903460399714)
(23.0, 1.6743792018959953)
(24.0, 0.5897998895186938)
(25.0, 0.8952477008075015)
(26.0, 2.68659795545284)
(27.0, 0.09574124756636056)
(28.0, 2.3288024281696993)
(29.0, 6.464793383315539)
(30.0, 0.9092622484764016)
(31.0, 2.2946902399209956)
(32.0, 3.3316258198019884)
(33.0, 0.2867395933621962)
(34.0, 0.20047449476950424)
(35.0, 1.3060183328706427)
(36.0, 0.8978775770871521)
(37.0, 0.26340531419769825)
(38.0, 0.30075757877786685)
(39.0, 0.2626661986419795)
(40.0, 0.17732528097283362)
(41.0, 0.38348122523885514)
(42.0, 0.6046228806365234)
(43.0, 0.46158110229313765)
(44.0, 0.7336260470836136)
(45.0, 4.753329488129701)
(46.0, 1.7266318068963364)
(47.0, 0.6032099667137306)
(48.0, 0.34734191231097844)
(49.0, 0.8579916931369149)
(50.0, 0.3158298065365783)
(51.0, 0.2601893020936289)
(52.0, 0.04050697020015894)
(53.0, 1.8121589358488994)
(54.0, 1.5035243333035742)
(55.0, 2.2073806392677806)
(56.0, 1.449602129288222)
(57.0, 0.5651008248861942)
(58.0, 0.6748033350465076)
(59.0, 5.210650649216041)
(60.0, 0.2032057845788929)
(61.0, 2.461271416319536)
(62.0, 0.9505925590281634)
(63.0, 6.7831699872514735)
(64.0, 1.1432473258097222)
(65.0, 4.001631779229866)
(66.0, 0.12994396314669507)
(67.0, 0.6160906309060667)
(68.0, 1.6433191598219536)
(69.0, 4.57373873203492)
(70.0, 0.4175086157338796)
(71.0, 0.04566530922972174)
(72.0, 1.8203361895010064)
(73.0, 0.7084605948058774)
(74.0, 0.35028462354677037)
(75.0, 3.0582046303876087)
(76.0, 0.8714619309004406)
(77.0, 10.918178319778663)
(78.0, 2.5371923348789363)
(79.0, 5.172950026296433)
(80.0, 2.1394891512493435)
(81.0, 0.6423487136991172)
(82.0, 0.23931816849039841)
(83.0, 1.9199552154248123)
(84.0, 0.3759221930476942)
(85.0, 1.3152200350604437)
(86.0, 6.782559214241865)
(87.0, 0.4875996887278235)
(88.0, 8.597362350315292)
(89.0, 2.1907688739135525)
(90.0, 0.6167787532180188)
(91.0, 0.17010142909182419)
(92.0, 0.2989957335578396)
(93.0, 3.409117788635637)
(94.0, 9.77148694466241)
(95.0, 1.0612106557450958)
(96.0, 0.01831287704797137)
(97.0, 0.0021142142640327376)
(98.0, 1.1295920226783693)
(99.0, 11.689715265292698)
(100.0, 1.0211133503684555)
};
\end{axis}

\end{tikzpicture}
\caption{Geodesic distance in $SO(3)$ between the estimated and actual rotation matrices using the maximum volume ellipsoid center as the position estimate for each on-board receiver.}
\label{fig:mve_error_r}
\end{figure}
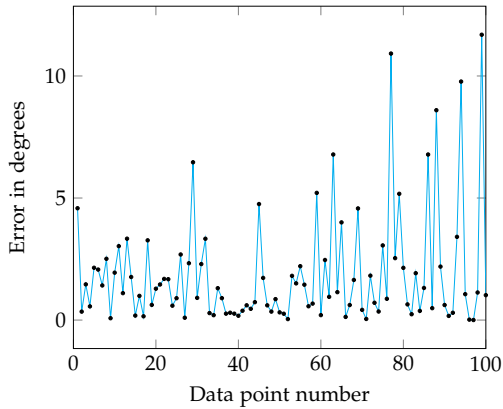

In the next section, we present experimental results that corroborate the algorithms presented in this paper.  

\subsection{Experimental results} 
Experiments to corroborate the effectiveness of the the position estimation algorithms and determination of $\phi(\cdot)$ were conducted at ``The Underwater Systems and Technology Laboratory'' facility in University of Porto. Figure \ref{fig:setup} shows the architecture of the setup at sea. 

\begin{figure}
\centering
\includegraphics[scale=0.2]{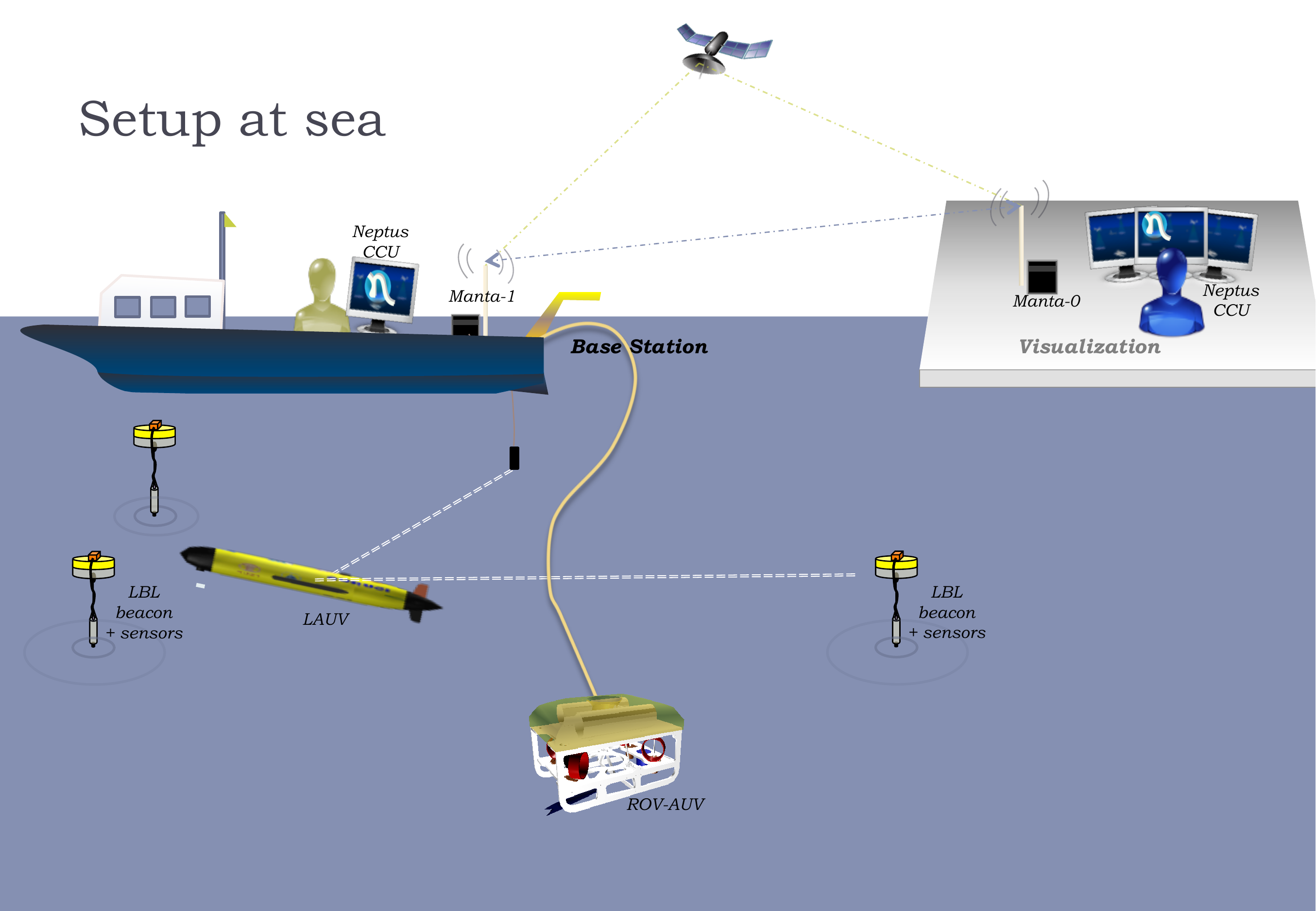}
\caption{Illustration of the experimental setup.}
\label{fig:setup}
\end{figure}

The vehicle is maneuvered through the path shown in green dots in figure \ref{fig:experiments} ten times. For each run, the range measurements are collected at regular intervals of time using the LBL beacons positioned at $4$ locations. Data from nine of these runs are used to compute the function $\phi(\cdot)$. Using this function, the positions are estimated for the last run. The position estimates with the computed function is shown using the blue dots. As it can be observed the position estimates are fairly accurate. The error at few instances can be explained by the fact that the beacon positions are not fixed and they tend to vary with tidal changes and currents driven by wind and thermohaline circulation. Also, the position estimate without prior calibration of the beacons using an arbitrary estimate for the function $\phi(\cdot)$ is shown using red dots; these position estimates are observed to be off by a huge margin. This indicates that the computation of the function $\phi(\cdot)$ by solving a SDP is a crucial step to the position estimation algorithm which in turn also affects the orientation estimates.

\begin{figure}
\centering
\includegraphics[width=7cm, height=5cm]{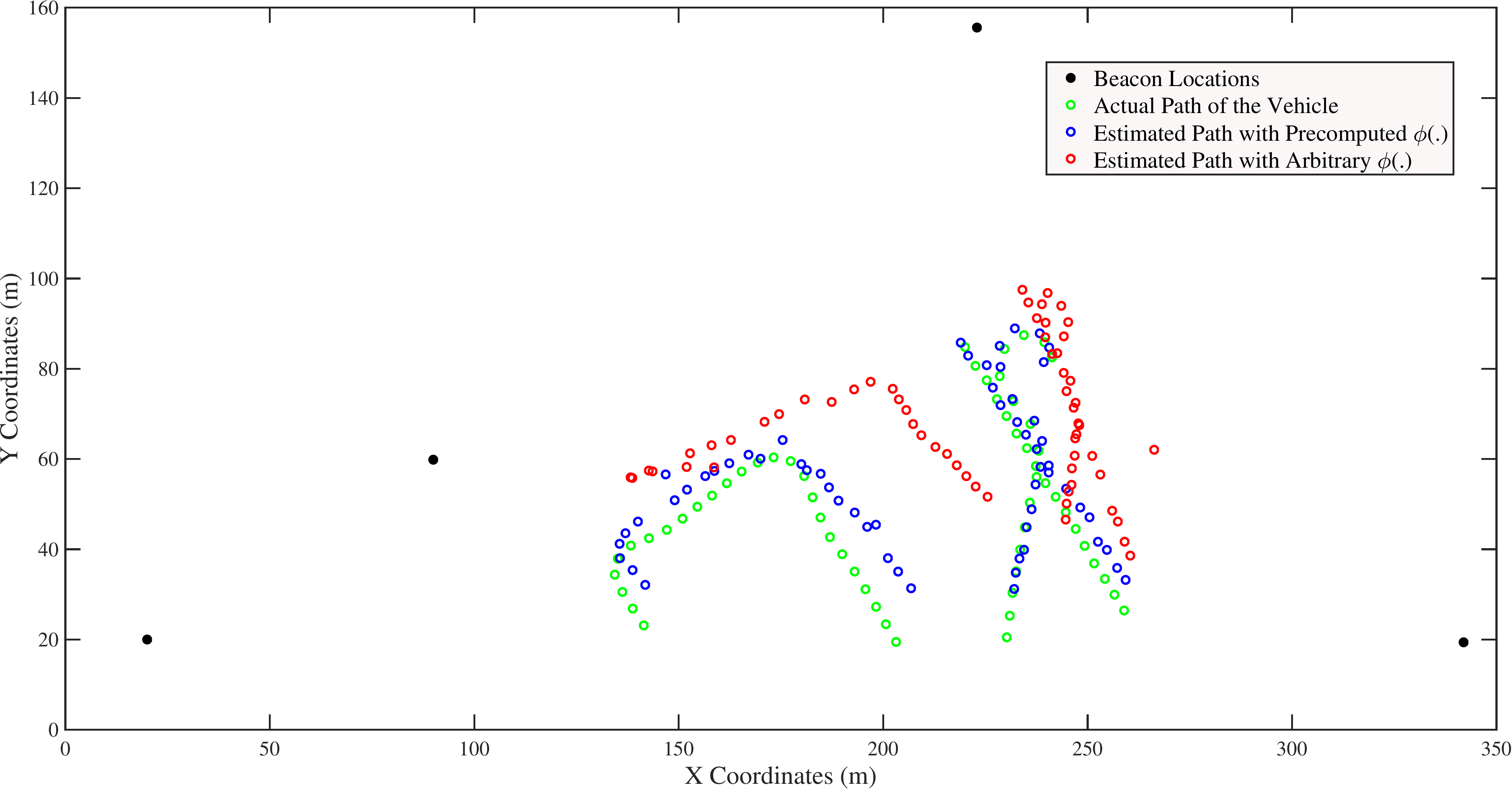}
\caption{Vehicle's actual path and estimated obtained from experiments.}
\label{fig:experiments}
\end{figure}

\section{Conclusion} \label{sec:conclusion}
This article presents algorithms for position and orientation estimation for underwater vehicles using range measurements obtained from LBL acoustic beacons. The position estimates are obtained by solving a convex optimization problem and the algorithm works under the assumption that the vehicle is traveling in the convex hull of the beacons. Simulation and experimental results corroborate the effectiveness of the algorithms. Furthermore, these algorithms are fast and hence can be implemented online. The orientation estimates are obtained by the solution to the ``Orthogonal Procrustes'' problem, obtained via singular value decomposition. A crucial step to the performance of these algorithm is the determination of a sensing function for the sensing model, which can also be computed by solving a convex optimization problem. This computation can also be performed online and in an adaptive fashion as illustrated in the simulation results. Future work can be focused on relaxing the assumption of the vehicle moving in the convex hull of the acoustic beacons and accounting for uncertainty in the beacon locations. 

\section*{Acknowledgements}
Kaarthik Sundar acknowledges the support of the U.S. Department of Energy through the LANL/LDRD Program and the Center for Nonlinear Studies for this work.









\appendix
\section*{Appendix A: Proof of Proposition \ref{prop:mve}}
\noindent For a fixed value of $i\in I$, Eq. \eqref{eq:mve-spheres} is given by 
\begin{flalign*}
\norm{P_j\bm u + \bm r_{c,j} - \bm B_i} \leqslant \phi(D_{ij}), \quad\forall \{\bm u:\norm{\bm u}^2 \leqslant 1\}.
\end{flalign*}
We can convert the above constraint to an equivalent semi-definite constraint using Schur Complement as follows:
\begin{flalign*}
\begin{bmatrix}
\phi(D_{ij}) & (P_j \bm u + \bm r_{c,j} - \bm B_i)^T \\ 
P_j \bm u + \bm r_{c,j} - \bm B_i & \phi(D_{ij})I_3
\end{bmatrix} \succeq 0, \\
\qquad \qquad \qquad \qquad \forall \{\bm u:\norm{\bm u} \leqslant 1\}. 
\end{flalign*}
Equivalently, 
\begin{flalign*}
& \Leftrightarrow x^2\phi(D_{ij}) + 2x\bm y^T(P_j \bm u + \bm r_{c,j} - \bm B_i) + \phi(D_{ij}) \bm y^T \bm y \geqslant 0 & \\
& \qquad \qquad \qquad \qquad \forall [x;\bm y], ~  \{\bm u:\norm{\bm u} \leqslant 1\}, & 
\end{flalign*}
\begin{flalign*}
& \Leftrightarrow x^2\phi(D_{ij}) + \min_{\bm u:\norm{\bm u} \leqslant 1} 2x\bm y^T P_j \bm u +2x\bm y^T(\bm r_{c,j} - \bm B_i) + & \\
& \qquad \qquad \phi(D_{ij}) \bm y^T \bm y \geqslant 0, \quad \forall [x;\bm y], &
\end{flalign*}
\begin{flalign*}
& \Leftrightarrow x^2\phi(D_{ij}) - 2x \norm{P_j \bm y} +2x\bm y^T(\bm r_{c,j} - \bm B_i) + & \\
& \qquad \qquad \phi(D_{ij}) \bm y^T \bm y \geqslant 0, \quad \forall [x;\bm y], &
\end{flalign*}
\begin{flalign*}
& \Leftrightarrow x^2\phi(D_{ij}) + 2 \bm y^T P_j \bm{\phi} +2x\bm y^T(\bm r_{c,j} - \bm B_i) + & \\
& \qquad \qquad  + \phi(D_{ij}) \bm y^T \bm y \geqslant 0, \quad \forall \{(x,\bm y,\bm{\phi}): \bm{\phi}^T\bm{\phi} \leq x^2\},  &
\end{flalign*}
\begin{flalign*}
& \Leftrightarrow \exists \lambda \geq 0 :
\begin{bmatrix}
\phi(D_{ij})-\lambda & (\bm r_{c,j}-\bm B_i)^T & 0 \\ 
(\bm r_{c,j} - \bm B_i) & \phi(D_{ij}) I_3 & P_j \\ 
0 & P_j & \lambda I_3
\end{bmatrix} \succeq 0.&
\end{flalign*}
The last two equivalences follow from Cauchy-Schwarz inequality and the $\mathcal S$-lemma \cite{Boyd2004}, respectively. \hfill \qed

\bibliographystyle{unsrt}
\bibliography{Localization.bib}

\end{document}